%
%
\documentclass[12pt,a4paper]{article}
\usepackage{amsmath,amssymb,amsthm,bm}
\usepackage{graphicx}
\usepackage{txfonts}

\newtheorem{theorem}{Theorem}[section]
\newtheorem{proposition}[theorem]{Proposition}
\newtheorem{corollary}[theorem]{Corollary}
\newtheorem{lemma}[theorem]{Lemma}
\newtheorem{remark}[theorem]{Remark}
\newtheorem{example}[theorem]{Example}

\newcommand{\1}{{\bm 1}}

\numberwithin{equation}{section}


\begin{document}

\begin{center}
\large\bf
On Quadratic Embedding Constants of Star Product Graphs
\end{center}

\bigskip

\begin{center}
Wojciech M{\l}otkowski \\
Instytut Matematyczny \\
Uniwersytet Wroc{\l}awski \\
Plac~Grunwaldzki~2/4, 50-384 Wroc{\l}aw, Poland \\
mlotkow@math.uni.wroc.pl
\\
and
\\
Nobuaki Obata\\
Graduate School of Information Sciences \\
Tohoku University\\
Sendai 980-8579 Japan \\
obata@math.is.tohoku.ac.jp
\end{center}

\bigskip

\begin{quote}
\textbf{Abstract}\enspace
A connected graph $G$ is of QE class if it admits a
quadratic embedding in a Hilbert space,
or equivalently if the distance matrix is conditionally
negative definite,
or equivalently if the quadratic embedding constant 
$\mathrm{QEC}(G)$ is non-positive.
For a finite star product of (finite or infinite) graphs
$G=G_1\star\dotsb \star G_r$ 
an estimate of $\mathrm{QEC}(G)$
is obtained after a detailed analysis of the minimal solution of
a certain algebraic equation.
For the path graph $P_n$ 
an implicit formula for $\mathrm{QEC}(P_n)$ is derived,
and by limit argument  
$\mathrm{QEC}(\mathbb{Z})=\mathrm{QEC}(\mathbb{Z}_+)=-1/2$ is shown.
During the discussion a new integer sequence is found.
\end{quote}

\begin{quote}
\textbf{Key words}\enspace
conditionally negative definite matrix,
distance matrix,
quadratic embedding,
QE constant
star product graph
\end{quote}

\begin{quote}
\textbf{MSC}\enspace
primary:05C50  \,\,  secondary:05C12 05C76
\end{quote}

\section{Introduction}

Let $G=(V,E)$ be a (finite or infinite) connected graph.
A map $\varphi$ from $V$ into a Hilbert space $\mathcal{H}$
(of finite or infinite dimension) is called a
\textit{quadratic embedding} if it fulfills
\begin{equation}\label{01eqn:quadratic embedding}
\|\varphi(x)-\varphi(y)\|^2=d(x,y),
\qquad
x,y\in V,
\end{equation}
where $\|\cdot\|$ stands for the norm of $\mathcal{H}$,
and $d(x,y)$ the graph distance between two vertices $x,y\in V$,
i.e., the length of a shortest walk (or path) connecting $x$ and $y$.
A graph $G$ is said to be \textit{of QE class}
if it admits a quadratic embedding.
Graphs of QE class have been studied along with
graph theory \cite{Balaji-Bapat2007}, \cite{Bapat2010}, \cite{Mlotkowski1992},
Euclidean distance geometry
\cite{Jaklic-Modic2010}, \cite{Jaklic-Modic2013},
\cite{Jaklic-Modic2014}, \cite{Liberti-Lavor-Maculan-Mucherino2014},
and so forth.
They have appeared also in quantum probability
and non-commutative harmonic analysis
\cite{Bozejko88}, \cite{Bozejko89}, \cite{Haagerup79},
\cite{Hora-Obata2007}, \cite{Obata2007}, \cite{Obata2011}.

It follows from
the result of Schoenberg \cite{Schoenberg1935},
\cite{Schoenberg1938}
(also Young--Householder \cite{Young-Householder1938}).
that $G$ is of QE class
if and only if the distance matrix $D = [d(x,y)]$ is
conditionally negative definite.
It is then natural to consider, as a quantitative approach,
the \textit{QE constant} of a graph $G$ defined by
\begin{equation}\label{01eqn:QEC}
\mathrm{QEC}(G)=\sup\{\langle f,Df\rangle\,;\,
f\in C_0(V), \, \langle f,f\rangle=1, \langle \1, f\rangle=0\},
\end{equation}
where $C_0(V)$ is the space of $\mathbb{R}$-valued functions on $V$
with finite supports,
and $\langle\cdot,\cdot\rangle$ is the canonical inner product
on $C_0(V)$.
Moreover, $\langle \1,f\rangle=\sum_{x\in V}f(x)$
by overuse of symbols, where $\1(x)=1$ for all $x\in V$.
Obviously, $G$ is of QE class if and only if $\mathrm{QEC}(G)\le0$.
The QE constant has been introduced in the recent paper
\cite{Obata-Zakiyyah2017},
where graph operations preserving the property of
QE class are discussed and the QE constants
of graphs on $n\le5$ vertices are listed.
Moreover, for a particular class of graphs 
distance spectrum (for generalities see \cite{Aouchiche-Hansen2014})
is useful for calculating the QE constants,
but the relation is not clear in general.

In this paper, we focus on the star product
as one of the most elementary graph operations.
Given graphs $G_j = (V_j, E_j)$ with distinguished vertices
$o_j\in V_j$, $1\le j\le r$,
the star product
\[
G_1\star\dotsb\star G_r
=(G_1,o_1)\star\dotsb\star (G_r,o_r)
\]
is by definition a graph obtained by glueing
graphs $G_j$ at the vertices $o_j$.
It is known (see e.g., \cite{Obata-Zakiyyah2017} for
an explicit statement) that a star product of two graphs
of QE class is again of QE class.
An equivalent property appears 
in the study of length functions on Cayley graphs,
of which the root traces back to Haagerup \cite{Haagerup79},
see also Bo\.zejko--Januszkiewicz--Spatzier \cite{Bozejko88}
and Bo\.zejko \cite{Bozejko89}.
However, a concise formula for the QE constant
of a star product is not known.
Our goal of this paper is
to derive an implicit description of 
$\mathrm{QEC}(G_1\star \dotsb\star G_r)$ and obtain
a sufficiently good estimate of it
in terms of $Q_j=\mathrm{QEC}(G_j)$.
The main results are stated in Theorems
\ref{01thm:QEC=0}, \ref{04thm:main estimate}
and their corollaries.

This paper is organized as follows.
In Section \ref{Sec:Preliminaries} we derive some estimates
of the minimal solution of an algebraic equation of the 
following type:
\begin{equation}\label{0eqn:abstractimportantequation}
\sum_{j=1}^{r}\frac{d_j}{a_jd_j +a_j-\lambda}=\frac{1}{\lambda}\,.
\end{equation}
In Section \ref{sec:Conditional Minimum} we study the conditional
minimum of a quadratic function of the following type:
\begin{equation}\label{0eqn:main quadratic form}
\phi(x_0,\bm{x}_1,\dots, \bm{x}_r)
=\sum_{j=1}^{r} a_j\left(\langle\bm{x}_j,\bm{x}_j\rangle
 +\langle \mathbf{1}_j,\bm{x}_j\rangle^2\right)
\end{equation}
subject to conditions:
\begin{equation}\label{0eqn:condition (1)}
x_0^2+\sum_{j=1}^{r}\langle\bm{x}_j,\bm{x}_j\rangle=1,
\qquad
x_0+\sum_{j=1}^{r}\langle \bm{1}_j,\bm{x}_j\rangle=0.
\end{equation}
We show that the conditional minimum of
\eqref{0eqn:main quadratic form} coincides with
the minimal solution of
\eqref{0eqn:abstractimportantequation}.
With these results we prove the
main theorem in Section \ref{Sec:Star product graphs}
and mention some relevant results and problems.
In Section \ref{Sec:Infinite graphs} we discuss infinite graphs,
in particular, infinite path graphs
$\mathbb{Z}_+$ and $\mathbb{Z}$.
The QE constant of a finite path $P_n$ for a general $n$ is
not known explicitly. 
We derive an indirect formula for $\mathrm{QEC}(P_n)$
and by taking limit we obtain
$\mathrm{QEC}(\mathbb{Z}_+)=\mathrm{QEC}(\mathbb{Z})=-1/2$.
Finally, in Section \ref{Sec:Appendix} 
we study some combinatorial identities used in the estimate of
$\mathrm{QEC}(P_n)$ and find a new integer sequence
which is interesting for itself.

\bigskip
{\bfseries Acknowledgements:}
WM is supported by NCN grant 2016/21/B/ST1/00628.
NO is supported by JSPS Grant-in-Aid for Scientific Research
No.~16H03939.
He thanks the Institute of Mathematics, University of Wroc{\l}aw
for their kind hospitality.

\section{Preliminaries}
\label{Sec:Preliminaries}

Given a natural number $r\ge1$ and a pair of parameter vectors
\[
\bm{a}=(a_1,a_2,\dots, a_r),
\qquad
\bm{d}=(d_1,d_2,\dots, d_r),
\]
we consider an algebraic equation of the following type:
\begin{equation}\label{01eqn:abstractimportantequation}
\sum_{j=1}^{r}\frac{d_j}{a_jd_j +a_j-\lambda}=\frac{1}{\lambda}\,.
\end{equation}
The parameters $\bm{a}$ and $\bm{d}$ are always assumed
to fulfill the following conditions:
\begin{enumerate}
\setlength{\itemsep}{0pt}
\item[(i)] $a_1,\dots,a_r$ are positive real numbers,
\item[(ii)] $d_1,\dots,d_r$ are positive real numbers or
$\infty$. If $d_j=\infty$, we understand that
\[
\frac{d_j}{a_jd_j +a_j-\lambda}
=\frac{\infty}{a_j\cdot\infty+a_j-\lambda}=\frac{1}{a_j}.
\]
\end{enumerate}

\subsection{Separation of solutions}

Given $\bm{a}=(a_1,a_2,\dots, a_r)$ and
$\bm{d}=(d_1,d_2,\dots, d_r)$,
arranging $a_jd_j+a_j$ in order,
we write
\[
\{a_1d_1+a_1,\dots, a_rd_r+a_r\}=\{c_1<\dots<c_s\}
\]
and set $c_0=0$.
It may happen that $c_s=\infty$.

\begin{proposition}\label{01prop:01}
Every open interval $(c_{i-1},c_i)$, $1\le i\le s$,
contains exactly one solution $\lambda_i$ of
\eqref{01eqn:abstractimportantequation}.
Moreover, these $\lambda_1,\dots,\lambda_s$ are all the
solutions of \eqref{01eqn:abstractimportantequation}.
\end{proposition}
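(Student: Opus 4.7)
My plan is to set
\[
F(\lambda) = \sum_{j=1}^{r}\frac{d_j}{a_jd_j + a_j - \lambda} - \frac{1}{\lambda}
\]
and study $F$ as a real function of $\lambda$ on $\mathbb{R} \setminus \{0, c_1, \dots, c_s\}$ (where in the definition of $F$ the terms with $d_j = \infty$ are read, per the paper's convention, as the constants $1/a_j$). A solution of \eqref{01eqn:abstractimportantequation} is the same thing as a zero of $F$, so I want to locate the zeros of $F$ on each maximal interval of its domain.

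The first step is monotonicity. A direct differentiation gives
\[
F'(\lambda) = \sum_{j : d_j < \infty} \frac{d_j}{(a_jd_j + a_j - \lambda)^2} + \frac{1}{\lambda^2} > 0
\]
throughout the domain (terms with $d_j = \infty$ contribute $0$). Hence $F$ is strictly increasing on every open interval where it is defined; in particular, it has at most one zero in each open interval $(c_{i-1}, c_i)$. The second step is to evaluate the one-sided limits at the endpoints and apply the intermediate value theorem. For $i=1$ the left endpoint is $c_0 = 0$, at which $-1/\lambda \to -\infty$; for $i \ge 2$ the left endpoint $c_{i-1}$ is a pole of some terms $d_j/(a_jd_j+a_j-\lambda)$ (namely those $j$ with $a_jd_j+a_j=c_{i-1}$), and since $\lambda > c_{i-1}$ makes the denominator negative, each such term tends to $-\infty$, while the remaining terms stay bounded. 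In both cases $\lim_{\lambda \to c_{i-1}^+} F(\lambda) = -\infty$. Similarly, at the right endpoint $c_i$, provided $c_i < \infty$, the relevant terms blow up from the positive side and $\lim_{\lambda \to c_i^-} F(\lambda) = +\infty$. The one special case is when $c_s = \infty$, i.e.\ some $d_j = \infty$: then on $(c_{s-1}, \infty)$ the function $F$ is increasing, with $F \to -\infty$ at $c_{s-1}^+$ and $\lim_{\lambda \to \infty} F(\lambda) = \sum_{j:\,d_j=\infty} 1/a_j > 0$, so IVT still yields exactly one zero. Combining monotonicity with these sign changes gives exactly one solution $\lambda_i \in (c_{i-1}, c_i)$ for each $1 \le i \le s$.

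For the second assertion --- that these are all solutions --- I would rule out the remaining parts of the real line. On $(-\infty, 0)$ every $a_jd_j + a_j - \lambda$ is positive, so the sum on the left of \eqref{01eqn:abstractimportantequation} is $\ge 0$ while $1/\lambda < 0$; thus no solution. At $\lambda = c_i$ or $\lambda = 0$ the equation is not defined. Finally, if $c_s < \infty$ and $\lambda > c_s$, every denominator $a_jd_j + a_j - \lambda$ is negative, so the left-hand side is negative while $1/\lambda > 0$; again no solution. This exhausts $\mathbb{R}$ outside the listed intervals.

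The main subtlety I expect is the bookkeeping of the limits at $c_i$ when several indices $j$ give the same value $a_jd_j + a_j = c_i$ (so $c_i$ is a pole of order $m_i > 1$), and the separate treatment of the case $c_s = \infty$; both are handled by isolating the finitely many terms whose denominators vanish and noting the sign of $c_i - \lambda$ from each side. All other manipulations --- strict monotonicity via $F'>0$ and IVT on each interval --- are routine.
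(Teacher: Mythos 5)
Your argument is correct, and for the main part (existence and uniqueness of a zero in each interval $(c_{i-1},c_i)$) it is essentially the paper's proof: the same function $f(\lambda)=\sum_j d_j/(a_jd_j+a_j-\lambda)-1/\lambda$, strict monotonicity on each interval (the paper says ``sum of increasing functions'' where you compute $F'>0$ explicitly, which amounts to the same thing), the one-sided limits $-\infty$ and $+\infty$ at the endpoints, and the separate treatment of the unbounded interval when $c_s=\infty$ via $\lim_{\lambda\to\infty}F(\lambda)=\sum_{d_j=\infty}1/a_j>0$. The one place you genuinely diverge is the closing claim that $\lambda_1,\dots,\lambda_s$ exhaust the solutions. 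The paper clears denominators: after grouping equal values of $a_jd_j+a_j$ the equation is equivalent to a polynomial equation of degree $s$, which has at most $s$ roots, so the $s$ roots already found are all of them. You instead sweep the complement of $\bigcup_i(c_{i-1},c_i)$ directly: on $(-\infty,0)$ the left side is nonnegative while $1/\lambda<0$; on $(c_s,\infty)$ (when $c_s<\infty$) every denominator is negative so $F<0$; and the equation is undefined at $0$ and at each $c_i$. Both are complete; yours is more elementary and avoids having to verify that the leading coefficients do not cancel when forming the degree-$s$ polynomial, while the paper's degree count is shorter and also records that the equation has exactly $s$ solutions counted as an algebraic equation, a fact it reuses implicitly when listing $\lambda_1<\dots<\lambda_s$.
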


\begin{proof}
We set
\begin{equation}\label{01eqn:functionf}
f(\lambda)=\sum_{j=1}^r \frac{d_j}{a_jd_j+a_j-\lambda}-\frac{1}{\lambda}\,,
\end{equation}
which becomes
\begin{equation}\label{01eqn:rewritten}
f(\lambda)=\sum_{i=1}^s \frac{d_i^\prime}{c_i-\lambda}-\frac{1}{\lambda}
\end{equation}
with some $d_i^\prime>0$.
If $c_s=\infty$, then $d_{s}'/(c_s-\lambda)$ becomes 
a positive constant.
Hence, for any $1\le i \le s$, the function $f(\lambda)$ is
strictly increasing on the interval $(c_{i-1},c_i)$
as a sum of increasing functions. 
Moreover,
for any $1\le i\le s$ with $c_i<\infty$ we have
\[
\lim_{\lambda\to c_{i-1}+0}f(\lambda)=-\infty,
\qquad
\lim_{\lambda\to c_{i}-0}f(\lambda)=+\infty.
\]
If $c_s=\infty$,
we have $\lim_{\lambda\to\infty}f(\lambda)>0$.
While, if $c_s<\infty$, we have $f(\lambda)<0$ for all $\lambda>c_s$.
Hence every interval $(c_{i-1},c_i)$, $1\le i\le s$, contains exactly
one solution $\lambda_i$ of \eqref{01eqn:abstractimportantequation}.
Since the equation \eqref{01eqn:abstractimportantequation}
is equivalent to an algebraic equation of degree $s$
as is seen from \eqref{01eqn:rewritten},
$\{\lambda_1,\dots,\lambda_s\}$ exhaust its solutions.
\end{proof}

\subsection{Estimate of the minimal solution}

Let $\lambda_1(\bm{d},\bm{a})$ denote the minimal solution
of \eqref{01eqn:abstractimportantequation},
which verifies $\lambda_1(\bm{d},\bm{a})>0$ by Proposition~\ref{01prop:01}.
In fact, for $r=1$ we have 
\begin{equation}\label{02eqn:lambda for r=1}
\lambda_1(\bm{d},\bm{a})=a_1
\end{equation}
and for $r=2$,
\begin{align}
\lambda_1(\bm{d},\bm{a})
&=\frac{2a_1a_2}
  {a_1+a_2+\sqrt{(a_1+a_2)^2-
   \dfrac{4(d_1+d_2+1)}{(d_1+1)(d_2+1)}\,a_1a_2}}
\nonumber \\
&=\frac{2a_1a_2}
  {a_1+a_2+\sqrt{(a_1-a_2)^2+
   \dfrac{4d_1d_2}{(d_1+1)(d_2+1)}\,a_1a_2}}\,.
\label{02eqn:lambda for r=2}
\end{align}
It is difficult to obtain a concise description of
$\lambda_1(\bm{d},\bm{a})$ for $r\ge3$ in general.
Instead, we will obtain good estimates for
$\lambda_1(\bm{d},\bm{a})$ useful in applications.

\begin{proposition}\label{02prop:020}
Let $r\ge2$.
The minimal solution $\lambda_1(\bm{d},\bm{a})$ of
\eqref{01eqn:abstractimportantequation} satisfies
\begin{equation}\label{02eqn:simple estimate of lambda_1}
\left(\dfrac{1}{a_1}+\dots+\dfrac{1}{a_r}\right)^{-1}
\le\lambda_1(\bm{d},\bm{a})
<\min\{a_1,\dots,a_r\},
\end{equation}
where the equality holds if and only if $d_1=\dots=d_r=\infty$.
\end{proposition}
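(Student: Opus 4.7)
The plan is to exploit the function
\[
f(\lambda) = \sum_{j=1}^{r} \frac{d_j}{a_j d_j + a_j - \lambda} - \frac{1}{\lambda}
\]
introduced in the proof of Proposition~\ref{01prop:01}. On $(0, c_1)$ it is strictly increasing with $f(0^+) = -\infty$ and $f(c_1^-) = +\infty$, and its unique root there is $\lambda_1(\bm{d}, \bm{a})$. Hence any test point $\mu \in (0, c_1)$ with $f(\mu) > 0$ forces $\lambda_1 < \mu$, and any $\mu \in (0, c_1)$ with $f(\mu) \le 0$ forces $\lambda_1 \ge \mu$. The whole proof reduces to choosing two well-selected values of $\mu$.

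For the upper bound I would take $\mu = m := \min\{a_1, \dots, a_r\}$. Positivity of each $d_j$ gives $c_1 = \min_j a_j(d_j+1) > m$, so $m \in (0, c_1)$. Fixing $j_0$ with $a_{j_0} = m$, the $j_0$-th summand of $f(m)$ simplifies to $d_{j_0}/(a_{j_0} d_{j_0}) = 1/m$ (or, if $d_{j_0} = \infty$, to $1/a_{j_0} = 1/m$ by the stated convention), which exactly cancels the $-1/m$ term. The remaining $r-1 \ge 1$ summands are strictly positive, so $f(m) > 0$ and $\lambda_1 < m$.

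For the lower bound, set $\mu = \lambda_0 := (1/a_1 + \dots + 1/a_r)^{-1}$. Since $\lambda_0 \le a_j$ for every $j$, termwise one has
\[
\frac{d_j}{a_j d_j + a_j - \lambda_0} \le \frac{1}{a_j},
\]
a relation that persists (as an equality) under the convention for $d_j = \infty$. Summing over $j$ and using $1/\lambda_0 = \sum_j 1/a_j$ yields $f(\lambda_0) \le 0$, whence $\lambda_0 \le \lambda_1$.

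The equality discussion is the one genuinely subtle point. The hypothesis $r \ge 2$ forces $\lambda_0 < a_j$ strictly for every $j$, so the displayed termwise inequality is strict whenever $d_j < \infty$ and is an equality when $d_j = \infty$. Consequently $f(\lambda_0) = 0$ occurs precisely when every $d_j = \infty$; in that case $f(\lambda) = \sum_j 1/a_j - 1/\lambda$ has root exactly $\lambda_0$, so $\lambda_0 = \lambda_1$. A final sanity check is that $\lambda_0$ indeed lies in the monotonicity interval $(0, c_1)$: this is trivial when $c_1 = \infty$, and otherwise follows from $\lambda_0 < m < c_1$ already established above.
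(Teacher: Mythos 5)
Your proof is correct and follows essentially the same route as the paper's: both bounds are obtained by evaluating the function $f$ at the test points $\min\{a_1,\dots,a_r\}$ and $\lambda_0=(1/a_1+\dots+1/a_r)^{-1}$ and invoking monotonicity of $f$ on $(0,c_1)$, with the same termwise inequality and the same equality analysis.
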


\begin{proof}
We first show the right-half of
\eqref{02eqn:simple estimate of lambda_1}.
Let $a_{j_0}=\min\{a_1,\dots,a_r\}$.
Since $a_{j_0}\le a_j<a_jd_j+a_j$ for all $j$, we have $0=c_0<a_{j_0}<c_1$.
Moreover, letting $f(\lambda)$ be as in \eqref{01eqn:functionf},
we have
\begin{align*}
f(a_{j_0})
&=\frac{d_{j_0}}{a_{j_0} d_{j_0}+a_{j_0}-a_{j_0}}
+\sum_{\substack{j=1,\\j\ne j_0}}^{r}\frac{d_j}{a_j d_j+a_j-a_{j_0}}-\frac{1}{a_{j_0}} \\
&=\sum_{\substack{j=1,\\j\ne j_0}}^{r}\frac{d_j}{a_j d_j+a_j-a_{j_0}}
>0.
\end{align*}
Since $f(\lambda)$ is increasing on the interval
$(0,c_1)$, we see that $\lambda_1(\bm{d},\bm{a})<a_{j_0}$.

Now we are going to prove the left-half
of \eqref{02eqn:simple estimate of lambda_1}.
For simplicity we set
\[
\lambda_0=\left(\dfrac{1}{a_1}+\dots+\dfrac{1}{a_r}\right)^{-1}.
\]
Obviously, for $1\le j\le r$ we have $0<\lambda_0<a_j$ and hence
\[
\frac{d_j}{a_jd_j+a_j-\lambda_0}\le\frac{1}{a_j},
\]
where the equality holds if and only if $d_j=\infty$.
Taking the sum over $1\le j\le r$ we get
\[
\sum_{j=1}^r \frac{d_j}{a_jd_j+a_j-\lambda_0}
\le\sum_{j=1}^r \frac{1}{a_j}
=\frac{1}{\lambda_0}\,,
\]
from which we see that $f(\lambda_0)\le0$
and the equality holds if and only if
$d_1=\dots=d_r=\infty$.
Since $0<\lambda_0<a_{j_0}<c_1$ and $f(\lambda)$ is increasing on the
interval $(0,c_1)$, we have $\lambda_0\le\lambda_1(\bm{d},\bm{a})$,
which shows the left-half
of \eqref{02eqn:simple estimate of lambda_1}.
\end{proof}

\subsection{Sharper estimates}

We will sharpen the estimate \eqref{02eqn:simple estimate of lambda_1}.
For $\bm{a}=(a_1,\dots,a_r)$
and $\bm{a}^\prime=(a_1^\prime,\dots,a_r^\prime)$
we write
$\bm{a}\le \bm{a}^\prime$ if
$a_j\le a_j^\prime$ for all $1\le j\le r$.
Similarly, we define $\bm{d}\le \bm{d}^\prime$.
The following comparison is useful.

\begin{proposition}\label{04prop:comparison}
If $\bm{d}\ge\bm{d}^\prime$ and $\bm{a}\le\bm{a}^\prime$,
then $\lambda_1(\bm{d},\bm{a})\le
\lambda_1(\bm{d}^\prime,\bm{a}^\prime)$.
Moreover, if $\bm{d}\neq\bm{d}^\prime$ or $\bm{a}\neq\bm{a}^\prime$
in addition, we have
$\lambda_1(\bm{d},\bm{a})<\lambda_1(\bm{d}^\prime,\bm{a}^\prime)$.
\end{proposition}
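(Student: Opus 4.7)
The plan is to compare the two algebraic equations termwise. Write $f(\lambda)$ as in \eqref{01eqn:functionf} for the data $(\bm{d},\bm{a})$ and let $f_\ast(\lambda)$ denote the analogous function built from $(\bm{d}',\bm{a}')$. I would first prove that $f(\lambda)\ge f_\ast(\lambda)$ throughout a common interval containing $\lambda_1(\bm{d},\bm{a})$, and then convert this into the desired comparison of minimal roots via Propositions~\ref{01prop:01} and \ref{02prop:020}.

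The core step is an elementary monotonicity analysis of a single term $\psi(d,a;\lambda):=d/((d+1)a-\lambda)$. Differentiation in $a$ shows that $\psi$ is strictly decreasing in $a$ on the set where $(d+1)a>\lambda$, while $\partial_d\psi=(a-\lambda)/((d+1)a-\lambda)^{2}$ is strictly positive precisely when $\lambda<a$, so $\psi$ is strictly increasing in $d$ on that range. The boundary case $d=\infty$, where $\psi=1/a$ by the convention fixed just after \eqref{01eqn:abstractimportantequation}, is consistent with both monotonicities and will be verified directly. Now by Proposition~\ref{02prop:020} we have $\lambda_1(\bm{d},\bm{a})<\min_j a_j\le\min_j a_j'\le c_1'$, where $c_1'$ denotes the analogue of $c_1$ for the primed data; hence the interval $(0,\min_j a_j)$ is contained in $(0,c_1')$, on which $f_\ast$ is strictly increasing with unique zero $\lambda_1(\bm{d}',\bm{a}')$ by Proposition~\ref{01prop:01}. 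Combining the two monotonicities termwise yields $f(\lambda)\ge f_\ast(\lambda)$ throughout $(0,\min_j a_j)$, and evaluating at $\lambda=\lambda_1(\bm{d},\bm{a})$ gives $f_\ast(\lambda_1(\bm{d},\bm{a}))\le f(\lambda_1(\bm{d},\bm{a}))=0$, which forces $\lambda_1(\bm{d},\bm{a})\le \lambda_1(\bm{d}',\bm{a}')$.

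For the strict version, if $(\bm{d},\bm{a})\neq(\bm{d}',\bm{a}')$ then at least one coordinate changes strictly, which makes the corresponding termwise inequality strict and hence upgrades $f(\lambda)\ge f_\ast(\lambda)$ to $f(\lambda)>f_\ast(\lambda)$ on $(0,\min_j a_j)$; the same argument then yields $\lambda_1(\bm{d},\bm{a})<\lambda_1(\bm{d}',\bm{a}')$. The only real obstacle is administrative: one must verify that both the monotonicity claim and its strict counterpart survive the boundary configurations in which some $d_j$ or $d_j'$ equals $\infty$, which is settled by a direct check in the three cases (both infinite, exactly one infinite, both finite) using the conventions of the preliminaries.
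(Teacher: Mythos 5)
Your argument is correct and takes essentially the same route as the paper: a termwise comparison of the two rational functions (the paper's inequality \eqref{04eqn:in proof 4.8}, which you obtain by differentiating the single term in $d$ and in $a$ separately and checking the $d=\infty$ convention), combined with the strict monotonicity of the auxiliary function on $(0,c_1^\prime)$ and the bound $\lambda_1(\bm{d},\bm{a})<\min_j a_j$ from Proposition~\ref{02prop:020}. The only difference is presentational: the paper asserts the termwise inequality ``by elementary algebra'' and compares the two functions on all of $(0,\lambda_1(\bm{d},\bm{a})]$, whereas you evaluate the primed function at the single point $\lambda_1(\bm{d},\bm{a})$; both versions tacitly use $r\ge2$ for the strict conclusion, exactly as the paper does.
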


\begin{proof}
Suppose that $0<d_j^\prime \le d_j\le\infty$
and $0<a_j\le a_j^\prime$.
Then, by elementary algebra we obtain
\begin{equation}\label{04eqn:in proof 4.8}
\frac{d_j}{a_jd_j+a_j-\lambda}
\ge\frac{d^\prime_j}{a_j^\prime d_j^\prime+a^\prime_j-\lambda},
\qquad
0<\lambda< a_j.
\end{equation}
Moreover, the strict inequality holds
if $0<d_j^\prime<d_j\le\infty$ or $0<a_j< a_j^\prime$.
Put
\[
f(\lambda)
=\sum_{j=1}^{r}\frac{d_j}{a_jd_j +a_j-\lambda}-\frac{1}{\lambda},
\qquad
g(\lambda)
=\sum_{j=1}^{r}
 \frac{d^\prime_j}{a_j^\prime d_j^\prime +a_j^\prime-\lambda}
 -\frac{1}{\lambda}.
\]
Now suppose that
$\bm{d}\ge\bm{d}^\prime$ and $\bm{a}\le\bm{a}^\prime$.
It then follows from \eqref{04eqn:in proof 4.8} that
$f(\lambda)\ge g(\lambda)$ for
$0<\lambda< \min\{a_1,\dots,a_r\}$,
and hence for $0<\lambda\le\lambda_1(\bm{d},\bm{a})$.
Therefore, $\lambda_1(\bm{d},\bm{a})\le
\lambda_1(\bm{d}^\prime,\bm{a}^\prime)$.
If $\bm{d}\neq\bm{d}^\prime$ or $\bm{a}\neq\bm{a}^\prime$,
we have $f(\lambda)> g(\lambda)$ for
$0<\lambda\le\lambda_1(\bm{d},\bm{a})$, which yields
$\lambda_1(\bm{d},\bm{a})<\lambda_1(\bm{d}^\prime,\bm{a}^\prime)$.
\end{proof}

As an immediate consequence of
Proposition \ref{04prop:comparison}, we have
\begin{equation}\label{estimationfrombelow0}
\left(\frac{1}{a_1}+\dots+\frac{1}{a_r}\right)^{-1}
=\lambda_{1}(\bm{\infty},\bm{a})
<\lambda_{1}(\bm{d},\bm{a})
\end{equation}
for any $\bm{d}\neq \bm{\infty}=(\infty,\dots,\infty)$.
Note that \eqref{estimationfrombelow0} is reproduction
of Proposition \ref{02prop:020}.

\begin{proposition}\label{gotoinfinitypropeq}
We have
\[
\lambda_1(\bm{d},\bm{a})
=\inf\left\{\lambda_1(\bm{e},\bm{a})\,;\,
\begin{array}{l}
\bm{e}=(e_1,\dots,e_r)\le \bm{d}, \\
e_1<\infty,\dots, e_r<\infty
\end{array}
\right\},
\]
or equivalently,
\[
\lambda_1(\bm{d},\bm{a})
=\lim_{n\to\infty}\lambda_1(\bm{d}\wedge\bm{n},\bm{a}),
\]
where $\bm{d}\wedge\bm{n}=(d_1\wedge n,\dots,d_r\wedge n)$.
\end{proposition}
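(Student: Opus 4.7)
The plan is to prove the limit formula first and then deduce the equivalent infimum description from it. The case $r=1$ is trivial by \eqref{02eqn:lambda for r=1} since the answer is $a_1$ regardless of $\bm{d}$, so I assume $r\ge 2$ throughout.

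Write $\mu_n:=\lambda_1(\bm{d}\wedge\bm{n},\bm{a})$. Because $\bm{d}\wedge\bm{n}$ is non-decreasing in $n$ and bounded above by $\bm{d}$, Proposition~\ref{04prop:comparison} shows that the sequence $(\mu_n)$ is non-increasing with $\mu_n\ge\lambda_1(\bm{d},\bm{a})>0$, and Proposition~\ref{02prop:020} provides the uniform bound $\mu_n<\min\{a_1,\dots,a_r\}$. Hence $\lambda_*:=\lim_{n\to\infty}\mu_n$ exists and lies in $[\lambda_1(\bm{d},\bm{a}),\min_j a_j]\subseteq(0,c_1)$, where I use that $c_1>\min_j a_j$ since $d_j>0$ for all $j$.

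The main step is to pass $n\to\infty$ in the identity
\[
\sum_{j=1}^{r}\frac{d_j\wedge n}{a_j(d_j\wedge n)+a_j-\mu_n}=\frac{1}{\mu_n}\,.
\]
For each $j$ with $d_j<\infty$, once $n\ge d_j$ the summand equals $d_j/(a_jd_j+a_j-\mu_n)$ and converges to $d_j/(a_jd_j+a_j-\lambda_*)$; the denominator stays positive because $\mu_n\le\min_k a_k<a_j(d_j+1)$. For each $j$ with $d_j=\infty$ the summand $n/(a_jn+a_j-\mu_n)$ tends to $1/a_j$ since $\mu_n$ remains bounded. The limit therefore reproduces \eqref{01eqn:abstractimportantequation} at $(\bm{d},\bm{a})$ under convention~(ii), so $\lambda_*$ is a solution of that equation lying in $(0,c_1)$. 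By the uniqueness of such a solution provided by Proposition~\ref{01prop:01}, $\lambda_*=\lambda_1(\bm{d},\bm{a})$, which gives the limit formula.

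The infimum statement then follows at once: Proposition~\ref{04prop:comparison} yields $\lambda_1(\bm{e},\bm{a})\ge\lambda_1(\bm{d},\bm{a})$ for every $\bm{e}\le\bm{d}$ with finite entries, while any such $\bm{e}$ satisfies $\bm{e}\le\bm{d}\wedge\bm{n}$ as soon as $n\ge\max_j e_j$, whence $\lambda_1(\bm{e},\bm{a})\ge\mu_n\to\lambda_1(\bm{d},\bm{a})$. The only genuine technical point is the treatment of coordinates with $d_j=\infty$ when passing to the limit, but the uniform bound $\mu_n<\min_k a_k$ from Proposition~\ref{02prop:020} keeps every denominator bounded away from zero and makes that passage routine.
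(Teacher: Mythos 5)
Your proof is correct, but it takes a genuinely different route from the paper's. The paper substitutes $u_j=1/d_j$ and treats the minimal solution as an implicit function $\lambda=g(u_1,\dots,u_r)$ defined by $F(u_1,\dots,u_r,\lambda)=0$; since $\partial F/\partial\lambda>0$, the function $g$ is continuous on $[0,\infty)^r$, and both assertions are read off from continuity at the point $(1/d_1,\dots,1/d_r)$. You instead run a monotone-convergence argument: Proposition~\ref{04prop:comparison} makes $\mu_n=\lambda_1(\bm{d}\wedge\bm{n},\bm{a})$ non-increasing and bounded below by $\lambda_1(\bm{d},\bm{a})$, the uniform bound $\mu_n<\min_j a_j<c_1$ from Proposition~\ref{02prop:020} keeps every denominator bounded away from zero, and passing to the limit in the defining equation identifies $\lambda_*$ with the unique solution in $(0,c_1)$ via Proposition~\ref{01prop:01}. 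What your approach buys is elementarity and explicit uniform estimates (no implicit-function machinery, and no appeal to the joint continuity in the $u_j$ that the paper asserts rather briskly); what the paper's approach buys is the stronger conclusion that $\lambda_1$ depends continuously on $\bm{d}$, not merely along the monotone sequence $\bm{d}\wedge\bm{n}$. Two minor remarks: separating the case $r=1$ is indeed necessary for your argument, since Proposition~\ref{02prop:020} is stated only for $r\ge2$; and in your final sentence the chain $\lambda_1(\bm{e},\bm{a})\ge\mu_n\to\lambda_1(\bm{d},\bm{a})$ only re-derives the lower bound on the infimum --- the matching upper bound comes from observing that each $\bm{d}\wedge\bm{n}$ is itself an admissible $\bm{e}$, which you clearly intend but do not quite state.
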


\begin{proof}
Here $\bm{a}=(a_1,\dots,a_r)$ is fixed.
Substituting $d_j\mapsto 1/u_j$ we define
\[
F(u_1,\dots,u_r,\lambda)
=\sum_{j=1}^{r}\frac{1}{a_j u_j+a_j-\lambda u_j}-\frac{1}{\lambda}\,.
\]
Then the equation $F(u_1,\dots,u_r,\lambda)=0$ gives rise to
an implicit function $\lambda=g(u_1,\dots,u_r)$
with the initial condition
$g(0,\dots,0)=\lambda_0=(1/a_1+\dots+1/a_r)^{-1}$.
It suffices to show that $g$ is well-defined
and is continuous on $[0,\infty)^{r}$.
We know that the minimal solution
\[
\lambda=g(u_1,\dots,u_r)
=\lambda_1\bigg(\frac{1}{u_1},\dots,\frac{1}{u_r},\bm{a}\bigg)
\]
exists for all $\bm{u}\in[0,\infty)^{r}$.
On the other hand for such $\bm{u}$ and $\lambda$ we have
\[
\frac{\partial F}{\partial\lambda}
=\sum_{j=1}^{r}\frac{u_j}
{(a_j u_j+a_j-\lambda u_j)^2}+\frac{1}{\lambda^2}>0,
\]
which implies that $g$ is continuous on $[0,\infty)^{r}$.
\end{proof}

Hereafter in this subsection we assume that $\bm{d}\neq\bm{\infty}$,
namely, $\bm{d}=(d_1,\dots,d_r)$ with $d_j<\infty$ for some $j$.
As before, we put
\[
c_1=\min\{a_1d_1+a_1,\dots, a_rd_r+a_r\}.
\]

\begin{proposition}
We have
\begin{equation}\label{estimation1}
\left(\frac{1}{c_1}+\sum_{j=1}^{r}\frac{d_j}{d_j a_j+a_j}\right)^{-1}
\le\lambda_1(\bm{d},\bm{a})
<\left(\sum_{j=1}^{r}\frac{d_j}{d_j a_j+a_j}\right)^{-1}
\end{equation}
and the equality holds if and only if $d_1 a_1+a_1=\dots=d_r a_r+a_r$.
\end{proposition}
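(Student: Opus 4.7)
My plan is to deduce both estimates directly from the defining equation $f(\lambda_1)=0$, which by Proposition~\ref{01prop:01} places $\lambda_1$ in $(0,c_1)$ with $c_1=\min_j(a_jd_j+a_j)$. Abbreviating $b_j:=a_jd_j+a_j$, that equation reads
\[
\sum_{j=1}^{r}\frac{d_j}{b_j-\lambda_1}=\frac{1}{\lambda_1},
\]
with every denominator $b_j-\lambda_1$ positive.

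For the right-hand strict inequality, I would compare termwise: for each $j$ with $d_j<\infty$ one has $d_j/b_j<d_j/(b_j-\lambda_1)$ because $\lambda_1>0$, while for $j$ with $d_j=\infty$ both expressions collapse to $1/a_j$ under the convention fixed at the start of Section~\ref{Sec:Preliminaries}. Since $\bm{d}\neq\bm{\infty}$ supplies at least one strictly unequal term, summing and inverting yields the desired upper bound on $\lambda_1$.

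For the left-hand inequality, the decisive step I anticipate is the partial-fraction rewriting
\[
\frac{d_j}{b_j-\lambda_1}-\frac{d_j}{b_j}=\lambda_1\cdot\frac{1}{b_j}\cdot\frac{d_j}{b_j-\lambda_1},
\]
which remains valid when $b_j=\infty$, both sides being zero. Summing over $j$, applying the uniform bound $1/b_j\le 1/c_1$, and then invoking the defining equation once more produces
\[
\frac{1}{\lambda_1}-\sum_{j=1}^{r}\frac{d_j}{b_j}
=\lambda_1\sum_{j=1}^{r}\frac{1}{b_j}\cdot\frac{d_j}{b_j-\lambda_1}
\le\frac{\lambda_1}{c_1}\cdot\frac{1}{\lambda_1}=\frac{1}{c_1},
\]
which rearranges to $\lambda_1\ge(1/c_1+\sum_j d_j/b_j)^{-1}$, the left inequality of \eqref{estimation1}. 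Because $d_j/(b_j-\lambda_1)>0$ for every $j$, equality in the displayed estimate forces $1/b_j=1/c_1$ for each $j$, i.e.\ $a_1d_1+a_1=\dots=a_rd_r+a_r$; conversely, substituting a common value $b_j\equiv b$ into \eqref{01eqn:abstractimportantequation} gives $\lambda_1=b/(1+\sum_j d_j)$, which matches the lower bound exactly.

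I expect the main obstacle to be bookkeeping rather than substantive: one must check that the partial-fraction identity and the uniform bound $1/b_j\le 1/c_1$ behave consistently under the convention $d_j=\infty$ (noting that $\bm{d}\neq\bm{\infty}$ guarantees $c_1<\infty$), so the argument goes through without carving out a separate infinite case. Once this is accepted, the two inequalities and the equality clause fall out of the single computation above.
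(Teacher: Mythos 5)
Your proof is correct and rests on the same termwise comparison as the paper's: your partial-fraction step combined with $1/b_j\le 1/c_1$ is algebraically equivalent to the paper's bound $\frac{d_j}{b_j-\lambda}\le\frac{d_j c_1}{(c_1-\lambda)(d_j+1)a_j}$, and your upper bound uses the same inequality $\frac{d_j}{b_j-\lambda}\ge\frac{d_j}{b_j}$. The only difference is presentational: you substitute $\lambda=\lambda_1$ directly and rearrange (which also spares you the paper's case distinction $\lambda''\ge c_1$), whereas the paper compares $\lambda_1$ with the explicit roots of the majorized and minorized equations via the monotonicity of $f$ on $(0,c_1)$.
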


\begin{proof}
Let $\lambda'$ and $\lambda''$ denote
the left- and right-hand sides of \eqref{estimation1}, respectively.
First we note that for $0<\lambda<c_1$ we have
\[
\frac{d_j}{d_j a_j+a_j-\lambda}\le \frac{d_j c_1}{(c_1-\lambda)(d_j+1)a_j},
\]
where the equality holds if and only if $c_1=d_j a_j+a_j$.
Therefore the solution of \eqref{01eqn:abstractimportantequation}
in the interval $(0,c_1)$ is greater than the solution of
\[
\sum_{j=1}^{r}\frac{d_j c_1}{(c_1-\lambda)(d_j+1)a_j}=\frac{1}{\lambda},
\]
which is exactly $\lambda'$.
For the second inequality in \eqref{estimation1}
we can assume that $\lambda''<c_1$
(for otherwise $\lambda_1(\bm{d},\bm{a})<\min\{a_1,\dots,a_r\}<c_1\le \lambda''$).
Then we have
\[
\frac{d_j}{d_j a_j+a_j-\lambda''}\ge \frac{d_j}{d_j a_j+a_j},
\]
with equality only when $d_j=\infty$.
Taking the sum over $j=1,\dots,r$ we get
\[
\sum_{j=1}^{r}\frac{d_j}{d_j a_j+a_j-\lambda''}>\frac{1}{\lambda''},
\]
which implies $\lambda_1(\bm{d},\bm{a})<\lambda''$
\end{proof}

Here is slightly more precise estimation from below.

\begin{proposition}
We have
\begin{equation}\label{estimation2}
c_1 \left(1+\sum_{j=1}^{r}\frac{d_j(c_1-\lambda_0)}{d_j a_j+a_j-\lambda_0}\right)^{-1}
\le\lambda_1(\bm{d},\bm{a}),
\end{equation}
where
\[
\lambda_0=\bigg(\frac{1}{a_1}+\dots+\frac{1}{a_r}\bigg)^{-1}
\]
and the equality holds if and only if $d_1 a_1+a_1=\dots=d_r a_r+a_r$.
\end{proposition}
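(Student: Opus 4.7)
The plan parallels the proof of \eqref{estimation1}, but with the majorant anchored at $\lambda=\lambda_0$ rather than at $\lambda=0$; this sharper anchoring is precisely what converts the earlier bound into the sharper \eqref{estimation2}.

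The first step is to establish, for each $j$ and every $\lambda_0\le\lambda<c_1$, the pointwise estimate
$$
\frac{d_j}{d_j a_j + a_j - \lambda}
\le \frac{d_j(c_1-\lambda_0)}{(d_j a_j + a_j - \lambda_0)(c_1-\lambda)}.
$$
Cross-multiplying (all denominators are positive in the relevant range) and expanding, the difference of the two sides factors cleanly as $(d_j a_j + a_j - c_1)(\lambda - \lambda_0)$ over a positive denominator; both factors are non-negative, the first by the definition of $c_1$ and the second by the range on $\lambda$, and each vanishes precisely in the claimed equality case ($c_1 = d_j a_j + a_j$ for the first factor, $\lambda=\lambda_0$ for the second). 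The coefficient $B_j:=d_j(c_1-\lambda_0)/(d_j a_j+a_j-\lambda_0)$ is chosen exactly so that the majorant is tight at $\lambda=\lambda_0$.

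Next, denote the right-hand side of \eqref{estimation2} by $\lambda^{\ast}$, so that $\lambda^{\ast}=c_1/(1+\sum_j B_j)$ and $\sum_j B_j=(c_1-\lambda^{\ast})/\lambda^{\ast}$. Clearly $\lambda^{\ast}<c_1$; the inequality $\lambda^{\ast}\ge\lambda_0$ reduces, via $1/\lambda_0=\sum_j 1/a_j$, to the term-wise $1/a_j\ge d_j/(d_j a_j+a_j-\lambda_0)$, i.e.\ $\lambda_0\le a_j$, which is immediate (and strict when $r\ge 2$). Summing the key inequality over $j$ and evaluating at $\lambda=\lambda^{\ast}$ then yields
$$
\sum_{j=1}^{r}\frac{d_j}{d_j a_j+a_j-\lambda^{\ast}}
\le \frac{\sum_j B_j}{c_1-\lambda^{\ast}}=\frac{1}{\lambda^{\ast}},
$$
which says $f(\lambda^{\ast})\le 0$ in the notation of \eqref{01eqn:functionf}. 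By Proposition~\ref{01prop:01}, $f$ is strictly increasing on $(0,c_1)$ with unique zero $\lambda_1(\bm{d},\bm{a})$, so $\lambda^{\ast}\le\lambda_1(\bm{d},\bm{a})$.

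For the equality clause, equality in \eqref{estimation2} forces term-by-term equality in the summed inequality; since $\lambda^{\ast}>\lambda_0$ strictly for $r\ge 2$, this in turn forces $c_1=d_j a_j+a_j$ for every $j$. Conversely, under that uniform condition the majorant is a genuine identity, and both sides of \eqref{estimation2} collapse to $c_1/(1+\sum_j d_j)$, which solves \eqref{01eqn:abstractimportantequation} directly. The main obstacle is isolating the correct chord in the first step: once one notices that cross-multiplication produces the clean factorization $(d_j a_j+a_j-c_1)(\lambda-\lambda_0)$, the rest of the argument is essentially mechanical.
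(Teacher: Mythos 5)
Your proposal is correct and follows the paper's own route: the heart of both arguments is the same chord majorant $\frac{d_j}{d_ja_j+a_j-\lambda}\le\frac{d_j(c_1-\lambda_0)}{(d_ja_j+a_j-\lambda_0)(c_1-\lambda)}$ on $[\lambda_0,c_1)$, tight exactly when $d_ja_j+a_j=c_1$, after which one compares the original equation with the majorized one whose solution is the left-hand side of \eqref{estimation2}. You merely make explicit two details the paper leaves implicit --- the verification that $\lambda^{\ast}\ge\lambda_0$ (so the majorant may legitimately be evaluated at $\lambda^{\ast}$) and the monotonicity argument $f(\lambda^{\ast})\le 0\Rightarrow\lambda^{\ast}\le\lambda_1(\bm{d},\bm{a})$ --- which is a welcome tightening rather than a different method.
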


\begin{proof}
For $1\le j\le r$ and $\lambda_0<\lambda<c_1$ we have
\[
\frac{d_j}{d_j a_j+a_j-\lambda}
\le\frac{d_j(c_1-\lambda_0)}{(d_j a_j+a_j-\lambda_0)(c_1-\lambda)},
\]
where the equality holds if and only if $c_1=d_j a_j+a_j$.
Therefore the minimal solution of \eqref{01eqn:abstractimportantequation} is greater
(or equal if $d_1 a_1+a_1=\dots=d_r a_r+a_r$) than the solution of
\[
\sum_{j=1}^{r}\frac{d_j(c_1-\lambda_0)}{(d_j a_j+a_j-\lambda_0)(c_1-\lambda)}=\frac{1}{\lambda},
\]
which is the left hand side of \eqref{estimation2}.
\end{proof}

One can check that \eqref{estimation2} gives
a more precise estimate of $\lambda_1(\bm{d},\bm{a})$
from below than \eqref{estimation1},
which is still better than \eqref{estimationfrombelow0}, i.e.,
\begin{align*}
\left(\sum_{j=1}^{r}\frac{1}{a_j}\right)^{-1}
&<\left(\frac{1}{c_1}
  +\sum_{j=1}^{r}\frac{d_j}{d_j a_j+a_j}\right)^{-1} \\
&\le c_1 \left(1+\sum_{j=1}^{r}\frac{d_j(c_1-\lambda_0)}{d_j a_j+a_j-\lambda_0}\right)^{-1}
\le\lambda_1(\bm{d},\bm{a}),
\end{align*}
with equalities if and only if $d_1 a_1+a_1=\dots=d_r a_r+a_r$.

\section{Conditional Minimum of a Quadratic Function}
\label{sec:Conditional Minimum}

Given a natural number $r\ge1$, and a pair of parameter vectors
\[
\bm{a}=(a_1,a_2,\dots, a_r),
\qquad
\bm{d}=(d_1,d_2,\dots, d_r),
\]
satisfying conditions:
\begin{enumerate}
\setlength{\itemsep}{0pt}
\item[(i)] $a_1,\dots,a_r\ge0$ are non-negative real numbers,
\item[(ii)] $d_1,\dots,d_r\ge1$ are natural numbers or $\infty$,
\end{enumerate}
we consider a quadratic function
in $1+d_1+\dots+d_r$ variables of the following form:
\begin{equation}\label{02eqn:main quadratic form}
\phi(x_0,\bm{x})
=\phi(x_0,\bm{x}_1,\dots, \bm{x}_r)
=\sum_{j=1}^{r} a_j\left(\langle\bm{x}_j,\bm{x}_j\rangle
 +\langle \mathbf{1}_j,\bm{x}_j\rangle^2\right),
\end{equation}
where $x_0\in\mathbb{R}$, $\bm{x}_j\in\mathbb{R}^{d_j}$,
$\mathbf{1}_j=[1\,\,\dots \,\,1]^{\mathrm{T}}\in\mathbb{R}^{d_j}$,
and $\langle\cdot,\cdot\rangle$ stands for the
canonical inner product.
In case of $d_j=\infty$ we always assume that vectors
$\bm{x}_j\in\mathbb{R}^{d_j}$ have finite supports,
that is, the entries of $\bm{x}_j$ vanish except finitely many ones.
For such vectors
$\langle\bm{x}_j,\bm{x}_j\rangle$ and
$\langle \mathbf{1}_j,\bm{x}_j\rangle$ are defined as finite sums
and $\phi(x_0,\bm{x})$
is defined on the set of vectors with finite supports.
Note also that the right-hand side of \eqref{02eqn:main quadratic form}
is free from the variable $x_0$.

Let $M(\bm{d},\bm{a})$ denote the conditional infimum:
\[
M(\bm{d},\bm{a})
=\inf \phi(x_0,\bm{x}),
\]
where the infimum is taken over the vectors
$(x_0,\bm{x})$ with finite supports,
fulfilling the conditions:
\begin{align}
&x_0^2+\sum_{j=1}^{r}\langle\bm{x}_j,\bm{x}_j\rangle=1,
\label{03eqn:condition (1)}\\
&x_0+\sum_{j=1}^{r}\langle \bm{1}_j,\bm{x}_j\rangle=0.
\label{03eqn:condition (2)}
\end{align}
If $d_j<\infty$ for all $1\le j\le r$,
we prefer to call $M(\bm{d},\bm{a})$ the conditional mimimum
rather than infimum.

Although $M(\bm{d},\bm{a})$ itself is defined for any choice of
real numbers $\bm{a}=(a_1,\dots,a_r)$,
the condition (i) above is posed for our application.

\subsection{Elementary properties of $M(\bm{d},\bm{a})$}

\begin{proposition}\label{03prop:M(d,a)le min}
We have $0\le M(\bm{d},\bm{a})\le \min\{a_1,\dots,a_r\}$.
\end{proposition}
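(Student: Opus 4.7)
The plan is to prove the two inequalities separately, and both should be essentially immediate from the structure of $\phi$ and the constraints.

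For the lower bound $M(\bm{d},\bm{a}) \ge 0$, I would simply observe that under hypothesis (i), every $a_j \ge 0$, and both quantities $\langle \bm{x}_j,\bm{x}_j\rangle \ge 0$ and $\langle \bm{1}_j,\bm{x}_j\rangle^2 \ge 0$ are manifestly non-negative. Hence each summand of \eqref{02eqn:main quadratic form} is non-negative, and so $\phi(x_0,\bm{x}) \ge 0$ on the entire domain, in particular on the constraint surface defined by \eqref{03eqn:condition (1)}--\eqref{03eqn:condition (2)}. Taking the infimum gives $M(\bm{d},\bm{a}) \ge 0$.

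For the upper bound $M(\bm{d},\bm{a}) \le \min\{a_1,\dots,a_r\}$, my approach is to exhibit, for each fixed index $j_0 \in \{1,\dots,r\}$, a single test vector $(x_0,\bm{x})$ satisfying both \eqref{03eqn:condition (1)} and \eqref{03eqn:condition (2)} and attaining the value $\phi(x_0,\bm{x}) = a_{j_0}$. Since $d_{j_0} \ge 1$ by hypothesis (ii), the space $\mathbb{R}^{d_{j_0}}$ has at least one coordinate. I would take $x_0 = -1/\sqrt{2}$, let $\bm{x}_{j_0}$ be the vector whose first entry equals $1/\sqrt{2}$ with all other entries zero, and set $\bm{x}_j = \bm{0}$ for $j \ne j_0$. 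Then \eqref{03eqn:condition (1)} gives $x_0^2 + \|\bm{x}_{j_0}\|^2 = 1/2+1/2 = 1$ and \eqref{03eqn:condition (2)} gives $x_0 + \langle \bm{1}_{j_0},\bm{x}_{j_0}\rangle = -1/\sqrt{2}+1/\sqrt{2} = 0$. Inserting into \eqref{02eqn:main quadratic form}, only the $j_0$-th summand survives and evaluates to $a_{j_0}(1/2+1/2) = a_{j_0}$. Consequently $M(\bm{d},\bm{a}) \le a_{j_0}$, and minimizing over $j_0$ yields $M(\bm{d},\bm{a}) \le \min\{a_1,\dots,a_r\}$.

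There is essentially no obstacle here: the only subtlety is to notice that the hypothesis $d_{j_0}\ge 1$ (so that $\bm{x}_{j_0}$ has at least one coordinate) is exactly what makes the test vector well-defined, and that the prescribed test vector has finite support so that $\phi$ is defined on it even when some $d_j = \infty$.
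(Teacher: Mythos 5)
Your proof is correct and follows essentially the same route as the paper: non-negativity of each summand gives the lower bound, and a test vector supported on a single factor $G_{j_0}$ (satisfying both constraints) gives $\phi = a_{j_0}$, hence the upper bound. The only cosmetic difference is that the paper observes that \emph{any} admissible $(x_0,\bm{x}_{j_0})$ with the other blocks zero yields the value $a_{j_0}$, whereas you exhibit one concrete such vector, which suffices.
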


\begin{proof}
It is obvious from definition that $\phi(x_0,\bm{x})\ge0$
for all $x_0$ and $\bm{x}$, so that $M(\bm{d},\bm{a})\ge0$.
Setting $\bm{x}_2=\dots=\bm{x}_r=\bm{0}$ and
taking $x_0$ and $\bm{x}_1$ in such a way that
\[
x_0^2+\langle\bm{x}_1,\bm{x}_1\rangle=1,
\qquad
x_0+\langle \bm{1}_1,\bm{x}_1\rangle=0,
\]
we see that $\phi(x_0,\bm{x})$ becomes
\[
a_1\left(\langle\bm{x}_1,\bm{x}_1\rangle
 +\langle \mathbf{1}_1,\bm{x}_1\rangle^2\right)
=a_1\left((1-x_0^2)+(-x_0)^2\right)
=a_1\,.
\]
Hence the function $\phi(x_0,\bm{x})$ attains the value $a_1$
under conditions \eqref{03eqn:condition (1)}
and \eqref{03eqn:condition (2)}.
Similarly, it attains the value $a_j$ for $1\le j\le r$.
Therefore, the conditional infimum verifies $M(\bm{d},\bm{a})
\le \min\{a_1,\dots,a_r\}$.
\end{proof}

\begin{proposition}
If $\bm{a}\le\bm{b}$ and $\bm{d}\le \bm{e}$, we have
\[
M(\bm{e},\bm{a})\le M(\bm{d},\bm{a})\le M(\bm{d},\bm{b}).
\]
\end{proposition}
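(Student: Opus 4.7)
The plan is to treat the two inequalities separately, since each reflects a distinct kind of monotonicity and each admits a short direct argument.

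For the right inequality $M(\bm{d},\bm{a}) \le M(\bm{d},\bm{b})$ with $\bm{a}\le\bm{b}$, the key observation is that the feasible set defined by \eqref{03eqn:condition (1)} and \eqref{03eqn:condition (2)} depends only on $\bm{d}$, so both conditional infima are computed over the same collection of finitely supported vectors. The quadratic function $\phi$ depends linearly on each coefficient $a_j$ via the nonnegative factor $\langle\bm{x}_j,\bm{x}_j\rangle+\langle\bm{1}_j,\bm{x}_j\rangle^{2}\ge 0$. Consequently, writing $\phi_{\bm{a}}$ and $\phi_{\bm{b}}$ for the two quadratic forms, one has $\phi_{\bm{a}}(x_0,\bm{x})\le \phi_{\bm{b}}(x_0,\bm{x})$ pointwise on the feasible set, and taking infimum on both sides preserves the inequality.

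For the left inequality $M(\bm{e},\bm{a}) \le M(\bm{d},\bm{a})$ with $\bm{d}\le\bm{e}$, I would embed the smaller feasible set isometrically into the larger one. Given a finitely supported $(x_0,\bm{x}_1,\dots,\bm{x}_r)$ with $\bm{x}_j\in\mathbb{R}^{d_j}$ satisfying \eqref{03eqn:condition (1)} and \eqref{03eqn:condition (2)}, define $\tilde{\bm{x}}_j\in\mathbb{R}^{e_j}$ by appending $e_j-d_j$ zero coordinates to $\bm{x}_j$; this is well-defined even when $e_j=\infty$, since $\tilde{\bm{x}}_j$ still has finite support. Then $\langle\tilde{\bm{x}}_j,\tilde{\bm{x}}_j\rangle=\langle\bm{x}_j,\bm{x}_j\rangle$ and the inner product of $\tilde{\bm{x}}_j$ with the all-ones vector equals $\langle\bm{1}_j,\bm{x}_j\rangle$, so $(x_0,\tilde{\bm{x}})$ is feasible for the $(\bm{e},\bm{a})$-problem while $\phi$ attains the same value there. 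Hence $M(\bm{e},\bm{a})\le\phi(x_0,\bm{x})$ for every feasible point of the $(\bm{d},\bm{a})$-problem, and taking infimum over the right-hand side yields the claim.

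There is essentially no obstacle in either step; both reduce to elementary bookkeeping. The one subtlety worth verifying explicitly is that zero-padding respects both the norm and the inner product with the all-ones vector hidden in the notation $\bm{1}_j$, which is immediate. The finite-support convention in the definition of $\phi$ ensures that the case where some $d_j$ or $e_j$ is $\infty$ requires no separate treatment.
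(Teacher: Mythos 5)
Your proof is correct and is precisely the argument the paper compresses into ``Straightforward by definition'': monotonicity of $\phi$ in the coefficients $a_j$ over the common feasible set for the right inequality, and zero-padding (which preserves $\langle\bm{x}_j,\bm{x}_j\rangle$ and $\langle\bm{1}_j,\bm{x}_j\rangle$) to enlarge the feasible set for the left one. Nothing further is needed.
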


\begin{proof}
Straightforward by definition.
\end{proof}

\begin{proposition}\label{03prop:trivial case}
If $a_j=0$ for some $1\le j\le r$,
we have $M(\bm{d},\bm{a})=0$.
\end{proposition}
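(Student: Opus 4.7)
The plan is to combine the lower bound $M(\bm{d},\bm{a})\ge 0$ already furnished by Proposition~\ref{03prop:M(d,a)le min} with an explicit construction of an admissible vector at which $\phi$ vanishes; this forces $M(\bm{d},\bm{a})=0$.

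Let $j_0$ be an index with $a_{j_0}=0$. The key observation is that the $j$-th summand in \eqref{02eqn:main quadratic form} only involves $\bm{x}_j$, so each summand can be zeroed out independently: the $j_0$-th summand vanishes automatically because its coefficient is zero, and every other summand vanishes as soon as we take $\bm{x}_j=\bm{0}$ for $j\ne j_0$. I would therefore seek a point of the form $(x_0,\bm{0},\dots,\bm{0},\bm{x}_{j_0},\bm{0},\dots,\bm{0})$ with $x_0$ and $\bm{x}_{j_0}$ chosen to satisfy the two constraints \eqref{03eqn:condition (1)} and \eqref{03eqn:condition (2)}.

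Concretely, I would set $x_0=1/\sqrt{2}$ and take $\bm{x}_{j_0}$ to be the vector whose first coordinate is $-1/\sqrt{2}$ and whose remaining coordinates are $0$. This vector has finite support, which matters when $d_{j_0}=\infty$, and since $d_{j_0}\ge 1$ the construction is always available. Then \eqref{03eqn:condition (1)} reads $\tfrac12+\tfrac12=1$, while \eqref{03eqn:condition (2)} reads $1/\sqrt{2}-1/\sqrt{2}=0$, so $(x_0,\bm{x})$ is admissible. All summands of $\phi$ at this point are $0$, so $\phi(x_0,\bm{x})=0$ and hence $M(\bm{d},\bm{a})\le 0$.

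There is essentially no obstacle: the only point requiring a tiny bit of care is that admissible vectors must have finite support even in the case $d_{j_0}=\infty$, which is handled by placing the single nonzero entry of $\bm{x}_{j_0}$ in the first coordinate. Combining $M(\bm{d},\bm{a})\le 0$ with $M(\bm{d},\bm{a})\ge 0$ from Proposition~\ref{03prop:M(d,a)le min} yields $M(\bm{d},\bm{a})=0$, and in fact the infimum is attained.
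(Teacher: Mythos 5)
Your proof is correct, and it is essentially the paper's argument: the paper simply cites Proposition~\ref{03prop:M(d,a)le min}, whose two-sided bound $0\le M(\bm{d},\bm{a})\le\min\{a_1,\dots,a_r\}=a_{j_0}=0$ already contains your conclusion, and your explicit admissible point is exactly the construction used there to establish the upper bound $M(\bm{d},\bm{a})\le a_{j_0}$.
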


\begin{proof}
Immediate from Proposition \ref{03prop:M(d,a)le min}.
\end{proof}

\begin{proposition}\label{03prop:trivial case(2)}
For $r=1$ we have $M(\bm{d},\bm{a})=a_1$.
\end{proposition}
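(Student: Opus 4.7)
The plan is to show that for $r=1$ the function $\phi$ is in fact constant on the constraint set, so the conditional infimum collapses to the value $a_1$.

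First I would note that Proposition~\ref{03prop:M(d,a)le min} already gives the upper bound $M(\bm{d},\bm{a}) \le \min\{a_1,\dots,a_r\} = a_1$, so only the matching lower bound remains. Writing the quadratic function in the case $r=1$ explicitly, we have
\[
\phi(x_0,\bm{x}_1) = a_1\bigl(\langle \bm{x}_1,\bm{x}_1\rangle + \langle \mathbf{1}_1,\bm{x}_1\rangle^2\bigr),
\]
and the two constraints \eqref{03eqn:condition (1)} and \eqref{03eqn:condition (2)} reduce to $x_0^2 + \langle \bm{x}_1,\bm{x}_1\rangle = 1$ and $\langle \mathbf{1}_1,\bm{x}_1\rangle = -x_0$.

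Substituting these two identities directly into $\phi$ gives
\[
\phi(x_0,\bm{x}_1) = a_1\bigl((1-x_0^2) + (-x_0)^2\bigr) = a_1,
\]
so $\phi$ equals $a_1$ at every admissible point. Hence $M(\bm{d},\bm{a}) = a_1$, and the proposition follows.

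There is no real obstacle here: the two linear/quadratic constraints together with the absence of cross-block terms make the objective constant along the feasible set, so the argument is essentially a one-line substitution. The only thing to keep in mind is that in the infinite case $d_1 = \infty$ we are working with finitely supported vectors, but all the sums $\langle \bm{x}_1,\bm{x}_1\rangle$ and $\langle \mathbf{1}_1,\bm{x}_1\rangle$ remain finite, so the computation is unaffected.
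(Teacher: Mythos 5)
Your proof is correct and is essentially identical to the paper's own argument: both substitute the two constraints into $\phi$ to see that it is identically equal to $a_1$ on the feasible set. The appeal to Proposition~\ref{03prop:M(d,a)le min} for the upper bound is harmless but unnecessary, since constancy of $\phi$ already gives the result.
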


\begin{proof}
In fact, $\phi(x_0,\bm{x})$ is constant under
\eqref{03eqn:condition (1)} and \eqref{03eqn:condition (2)} as
\[
\phi(x_0,\bm{x})
=a_1\left(\langle\bm{x}_1,\bm{x}_1\rangle
 +\langle \mathbf{1}_1,\bm{x}_1\rangle^2\right)
=a_1\left((1-x_0^2)+(-x_0)^2\right)
=a_1.
\]
Therefore, $M(\bm{d},\bm{a})=a_1$.
\end{proof}

\subsection{A Characterization of $M(\bm{d},\bm{a})$}

\begin{theorem}\label{02thm:Characterization of M(d,a)}
Let $r\ge1$.
Assume that $a_j>0$ and $1\le d_j\le\infty$ for all $1\le j\le r$.
Then $M(\bm{d},\bm{a})$ coincides with
the minimal solution of
\begin{equation}\label{02eqn:key equation 100}
\sum_{j=1}^{r}\frac{d_j}{a_j d_j+a_j-\lambda}=\frac{1}{\lambda}\,.
\end{equation}
In other words, with the notations introduced in Section 
\ref{Sec:Preliminaries}, we have
\begin{equation}\label{03eqn:M=lambda_1}
M(\bm{d},\bm{a})=\lambda_1(\bm{d},\bm{a}).
\end{equation}
\end{theorem}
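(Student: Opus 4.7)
The plan is to treat this as a constrained optimization problem via Lagrange multipliers, show that every critical value of $\phi$ equals the Lagrange multiplier itself, and then identify the smallest such multiplier with $\lambda_1(\bm{d},\bm{a})$.

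First I would handle the case where $d_1,\dots,d_r<\infty$, so that the constraint set defined by \eqref{03eqn:condition (1)} and \eqref{03eqn:condition (2)} is a compact subset of $\mathbb{R}^{1+d_1+\dots+d_r}$ and the minimum $M(\bm{d},\bm{a})$ is actually attained. Writing $s_j=\langle\bm{1}_j,\bm{x}_j\rangle$, I would form the Lagrangian
\[
L=\phi(x_0,\bm{x})-\lambda\bigg(x_0^2+\sum_j\langle\bm{x}_j,\bm{x}_j\rangle-1\bigg)
-2\mu\bigg(x_0+\sum_j s_j\bigg)
\]
and compute $\partial L/\partial x_0=0$ and $\partial L/\partial\bm{x}_j=0$. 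These read
\[
\lambda x_0=-\mu,\qquad (a_j-\lambda)\bm{x}_j=(\mu-a_j s_j)\bm{1}_j.
\]
Taking the inner product of the vector equation with $\bm{x}_j$ and summing over $j$, together with $\lambda x_0=-\mu$, I expect to obtain the clean identity $\phi(x_0,\bm{x})=\lambda$ at every critical point. Hence the minimum $M(\bm{d},\bm{a})$ coincides with the smallest admissible value of $\lambda$.

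Next I would identify the admissible values of $\lambda$. If $\mu\neq0$ and $\lambda\neq a_j$, the vector equation gives $\bm{x}_j=(\mu-a_js_j)(a_j-\lambda)^{-1}\bm{1}_j$; taking the inner product with $\bm{1}_j$ solves $s_j=\mu d_j/(a_jd_j+a_j-\lambda)$. Plugging this into the linear constraint $x_0+\sum_j s_j=0$ and using $x_0=-\mu/\lambda$, division by $\mu$ yields exactly \eqref{02eqn:key equation 100}. By Proposition~\ref{01prop:01} its solutions are $\lambda_1<\dots<\lambda_s$. The residual case $\mu=0$ forces $x_0=0$, and then the vector equation shows each nonzero $\bm{x}_j$ must satisfy $\lambda=a_j$, so the extra candidate values are among $\{a_1,\dots,a_r\}$. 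By Proposition~\ref{02prop:020} one has $\lambda_1(\bm{d},\bm{a})<\min\{a_j\}$, so $\lambda_1$ is indeed the smallest critical value, giving $M(\bm{d},\bm{a})=\lambda_1(\bm{d},\bm{a})$. I would also check that $\lambda_1$ is actually attained by exhibiting the explicit minimizer obtained from the formulas for $s_j$ and $\bm{x}_j$ above (after normalizing $\mu$ via the quadratic constraint), which confirms $M\le\lambda_1$.

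Finally I would extend to the case where some $d_j=\infty$. Since $\phi$ is defined on finitely supported vectors and $M(\bm{d},\bm{a})$ is monotone in $\bm{d}$, taking the componentwise truncation $\bm{d}\wedge\bm{n}$ gives $M(\bm{d},\bm{a})=\lim_n M(\bm{d}\wedge\bm{n},\bm{a})$ by an elementary sandwich argument: any finitely supported test vector lies in $\mathbb{R}^{d_j\wedge n}$ for $n$ large, so the infimum over the full space equals the limit of the finite-dimensional minima. Combining with Proposition~\ref{gotoinfinitypropeq} for the right-hand side, the identity $M=\lambda_1$ in the finite case passes to the limit to yield \eqref{03eqn:M=lambda_1} in full generality.

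I expect the main obstacle to be the bookkeeping in the Lagrange-multiplier step: one must track the degenerate cases $\lambda=a_j$ and $\mu=0$ cleanly in order to show that no spurious critical value drops below $\lambda_1$, and verify that the candidate minimizer read off from the Lagrange equations is consistent with both constraints for $\lambda=\lambda_1$ specifically (in particular that the denominators $a_jd_j+a_j-\lambda_1$ are all positive, which follows from $\lambda_1<c_1$).
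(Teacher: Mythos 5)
Your plan follows essentially the same route as the paper's proof: Lagrange multipliers, the identity $\phi=\lambda$ at every stationary point, reduction of the non-degenerate stationarity equations to \eqref{02eqn:key equation 100}, elimination of the degenerate candidate values via $\lambda_1<\min\{a_1,\dots,a_r\}$, and a truncation/limit argument for the case of infinite $d_j$. The only cosmetic difference is that you solve the vector equation componentwise through $s_j=\langle\bm{1}_j,\bm{x}_j\rangle$ where the paper invokes a small lemma on $(J-\alpha I)\bm{x}=\beta\bm{1}$; note merely that the $\mu=0$ branch can also yield $\lambda=a_jd_j+a_j$ (not only $\lambda=a_j$), which is harmless since all such values exceed $\min\{a_1,\dots,a_r\}$.
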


For $r=1$ the assertion
in Theorem \ref{02thm:Characterization of M(d,a)} is immediate.
In fact, the unique solution of
\[
\frac{d_1}{a_1 d_1+a_1-\lambda}=\frac{1}{\lambda}
\]
is $\lambda=a_1$.
On the other hand, we have $M(\bm{d},\bm{a})=a_1$ by
Proposition \ref{03prop:trivial case(2)}.

In the rest of this subsection,
we will prove Theorem \ref{02thm:Characterization of M(d,a)}
under the condition that $r\ge2$,
$a_j>0$ and $1\le d_j<\infty$ for all $1\le j\le r$.
The limit case will be treated in the next subsection.

Employing the method of Lagrange multipliers, we set
\[
F(x_0,\bm{x},\lambda,\mu)
=\phi(x_0,\bm{x})-\lambda(g(x_0,\bm{x})-1)-\mu h(x_0,\bm{x}),
\]
where
\begin{align}
g(x_0,\bm{x})
&=x_0^2+\sum_{j=1}^{r}\langle\bm{x}_j,\bm{x}_j\rangle,
\label{02eqn:def of g}\\
h(x_0,\bm{x})
&=x_0+\sum_{j=1}^{r}\langle \mathbf{1}_j,\bm{x}_j\rangle.
\label{02eqn:def of h}
\end{align}
Let $\mathcal{S}$ be the set of stationary points of
$F(x_0,\bm{x},\lambda,\mu)$, namely, the set of solutions of
the system of equations:
\begin{align}
&\frac{\partial F}{\partial x_0}=0,
\label{02eqn:stationary points of F(0)}\\
&\frac{\partial F}{\partial \bm{x}_j}=\bm{0},
\quad 1\le j\le r,
\label{02eqn:stationary points of F(j)}\\
&\frac{\partial F}{\partial \lambda}
=\frac{\partial F}{\partial \mu}
=0,
\label{02eqn:stationary points of F(condition)}
\end{align}
where
\[
\frac{\partial}{\partial \bm{x}_j}
=\left[
\frac{\partial}{\partial x_{j1}} \,\,
\dots \,\,
\frac{\partial}{\partial x_{jd_j}}
\right]^{\mathrm{T}},
\qquad
\bm{x}_j=\Big[x_{j1} \,\, \dots \,\, x_{jd_j}\Big]^{\mathrm{T}}.
\]
Since conditions \eqref{03eqn:condition (1)}
and \eqref{03eqn:condition (2)} determine a smooth compact
manifold (in fact, a sphere of dimension $d_1+\dots+d_r-1\ge1$),
the conditional minimum of $\phi(x_0,\bm{x})$ is found from
the stationary points of $F(x_0,\bm{x},\lambda,\mu)$
in such a way that
\begin{equation}\label{02eqn:M(b,a)00}
M(\bm{d},\bm{a})=\min\{\phi(x_0,\bm{x})\,;\,
(x_0,\bm{x},\lambda,\mu)\in\mathcal{S}\}.
\end{equation}

We will first obtain explicit forms of
\eqref{02eqn:stationary points of F(0)}
and \eqref{02eqn:stationary points of F(j)}.
Applying elementary calculus to \eqref{02eqn:main quadratic form},
we come to
\[
\frac{\partial}{\partial x_0}\phi(x_0,\bm{x})=0,
\qquad
\frac{\partial}{\partial \bm{x}_j}\phi(x_0,\bm{x})
=2a_{j}\bm{x}_j+2a_j\langle\bm{1}_j,\bm{x}_j\rangle\bm{1}_j\,.
\]
Similarly, from \eqref{02eqn:def of g} and \eqref{02eqn:def of h}
we obtain
\begin{gather*}
\frac{\partial}{\partial x_0}g(x_0,\bm{x})=2x_0,
\qquad
\frac{\partial}{\partial \bm{x}_j}g(x_0,\bm{x})=2\bm{x}_j, \\
\frac{\partial}{\partial x_0}h(x_0,\bm{x})=1,
\qquad
\frac{\partial}{\partial \bm{x}_j}h(x_0,\bm{x})=\bm{1}_j\,.
\end{gather*}
Thus, \eqref{02eqn:stationary points of F(0)}
and \eqref{02eqn:stationary points of F(j)} are 
respectively equivalent to
\begin{equation}\label{vectorequation0}
2\lambda x_0+\mu=0,
\end{equation}
and
\begin{equation}\label{03eqn:vectorequation}
2a_j\bm{x}_j+2a_j\langle \bm{1}_j,\bm{x}_j\rangle\bm{1}_j
=2\lambda\bm{x}_j+\mu\bm{1}_j\,,
\qquad 1\le j\le r.
\end{equation}
We now employ matrix-notation for \eqref{03eqn:vectorequation}.
The matrix whose entries are all one is denoted by $J$ without
explicitly mentioning its size.
Similarly, the identity matrix is denoted by $I$.
Using the obvious relation
\[
\langle \bm{1}_j,\bm{x}_j\rangle\bm{1}_j=J\bm{x}_j\,,
\]
\eqref{03eqn:vectorequation} becomes
\[
(2a_j J+(2a_j-2\lambda)I)\bm{x}_j=\mu \bm{1}_j\,,
\]
or equivalently,
\begin{equation}\label{02eqn:key equation}
\left(J-\left(\frac{\lambda}{a_j}-1\right)I\right)\bm{x}_j
=\frac{\mu}{2a_j}\,\bm{1}_j\,,
\qquad 1\le j\le r.
\end{equation}
On the other hand,
\eqref{02eqn:stationary points of F(condition)}
is equivalent to conditions \eqref{03eqn:condition (1)}
and \eqref{03eqn:condition (2)}.
Consequently, we have
\[
\mathcal{S}
=\{(x_0,\bm{x},\lambda,\mu)\,\,
\text{satisfying
\eqref{03eqn:condition (1)},
\eqref{03eqn:condition (2)},
\eqref{vectorequation0} and \eqref{02eqn:key equation}}\}.
\]

\begin{lemma}\label{02lem:values at stationary point}
If $(x_0,\bm{x},\lambda,\mu)\in\mathcal{S}$,
then $\phi(x_0,\bm{x})=\lambda$.
In particular,
\begin{equation}\label{02eqn:M(b,a)01}
M(\bm{d},\bm{a})=\min\{\lambda\,;\,
(x_0,\bm{x},\lambda,\mu)\in\mathcal{S}\}.
\end{equation}
\end{lemma}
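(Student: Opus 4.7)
The plan is to exploit the Lagrange-multiplier system \eqref{vectorequation0}--\eqref{02eqn:key equation} together with the constraints \eqref{03eqn:condition (1)}--\eqref{03eqn:condition (2)} to show directly that $\phi(x_0,\bm{x})$ equals the multiplier $\lambda$ at any stationary point. The idea is to undo the factor of $2$ that Lagrange differentiation introduces, by pairing the gradient equation against the variables themselves.

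Concretely, I would take the inner product of \eqref{03eqn:vectorequation} with $\bm{x}_j$, obtaining
\[
2a_j\langle\bm{x}_j,\bm{x}_j\rangle+2a_j\langle\bm{1}_j,\bm{x}_j\rangle^2
=2\lambda\langle\bm{x}_j,\bm{x}_j\rangle
+\mu\langle\bm{1}_j,\bm{x}_j\rangle,
\qquad 1\le j\le r.
\]
Summing over $j$ the left-hand side is exactly $2\phi(x_0,\bm{x})$ by the definition \eqref{02eqn:main quadratic form}, while the right-hand side can be simplified using the constraints: \eqref{03eqn:condition (1)} gives $\sum_j\langle\bm{x}_j,\bm{x}_j\rangle=1-x_0^2$, and \eqref{03eqn:condition (2)} gives $\sum_j\langle\bm{1}_j,\bm{x}_j\rangle=-x_0$. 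Hence
\[
2\phi(x_0,\bm{x})=2\lambda(1-x_0^2)-\mu x_0.
\]
At this point I plug in the scalar stationarity relation \eqref{vectorequation0}, i.e. $\mu=-2\lambda x_0$, which yields $-\mu x_0=2\lambda x_0^2$ and therefore $2\phi(x_0,\bm{x})=2\lambda$, giving $\phi(x_0,\bm{x})=\lambda$.

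The identity \eqref{02eqn:M(b,a)01} is then immediate from \eqref{02eqn:M(b,a)00}: the constraint set defined by \eqref{03eqn:condition (1)}--\eqref{03eqn:condition (2)} is compact (a sphere of dimension $d_1+\dots+d_r-1$), so the conditional minimum is attained at a stationary point of $F$, and at each such point the value of $\phi$ is precisely the multiplier $\lambda$. Taking the minimum of these $\lambda$-values over $\mathcal{S}$ gives $M(\bm{d},\bm{a})$.

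There is no real obstacle here; the computation is routine. The only point that deserves a moment of care is the substitution $\mu=-2\lambda x_0$, which requires \eqref{vectorequation0} and which is exactly the scalar Lagrange equation that one might otherwise overlook when focusing on the vector equations \eqref{02eqn:key equation}. The argument also tacitly uses that $\phi$ is a quadratic form, so that pairing the gradient with the argument reproduces $2\phi$ (Euler's relation); this is the structural reason the proof is so short.
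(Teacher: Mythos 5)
Your proposal is correct and follows exactly the paper's own argument: pairing \eqref{03eqn:vectorequation} with $\bm{x}_j$, summing over $j$, applying the two constraints, and then using \eqref{vectorequation0} to cancel the remaining terms. Nothing to add.
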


\begin{proof}
Taking the inner product of \eqref{03eqn:vectorequation}
with $\bm{x}_{j}$, we get
\[
2a_{j}\langle \bm{x}_j,\bm{x}_j\rangle
+2a_{j}\langle \bm{1}_j,\bm{x}_j\rangle^2
=2\lambda \langle \bm{x}_j,\bm{x}_j \rangle
+\mu\langle \bm{1}_j,\bm{x}_j\rangle.
\]
Taking the sum over $j$
and applying conditions \eqref{03eqn:condition (1)}
and \eqref{03eqn:condition (2)}, we obtain
\[
2\phi(x_0,\bm{x})=2\lambda(1-x_0^2)+\mu (-x_0)
=2\lambda-2\lambda x_0^2-\mu x_0
\]
and hence $\phi(x_0,\bm{x})=\lambda$ by \eqref{vectorequation0}.
Then \eqref{02eqn:M(b,a)01} is immediate from \eqref{02eqn:M(b,a)00}.
\end{proof}

Upon solving the linear equation \eqref{02eqn:key equation}
the following elementary result is useful.

\begin{lemma}\label{02lem:05}
Let $m\ge1$.
Let $J$ denote the $m\times m$ matrix whose entries are all one,
and $I$ the $m\times m$ identity matrix.
For $\alpha,\beta\in\mathbb{R}$ we consider the linear equation:
\[
(J-\alpha I)\bm{x}=\beta\bm{1}.
\]
\begin{enumerate}
\item[\upshape (i)] If $\alpha=0$, then
the solution is given by
\[
\bm{x}=\dfrac{\beta}{m}\,\1+\bm{y},
\qquad \bm{y}\in\mathrm{Ker\,} J.
\]
Moreover, $\dim \mathrm{Ker\,} J=m-1$.
In particular, the solution is unique when $m=1$.
\item[\upshape (ii)] If $\alpha=m$ and $\beta=0$,
the solution is given by $\bm{x}=c\bm{1}$ with $c\in\mathbb{R}$.
In this case, $m$ is an eigenvalue of $J$ and $\bm{x}$ is an
associated eigenvector.
\item[\upshape (iii)] If $\alpha=m$ and $\beta\neq0$, there is no solution.
\item[\upshape (iv)] If $\alpha\neq 0$ and $\alpha\neq m$, 
the solution is unique and given by
\[
\bm{x}=\frac{\beta}{m-\alpha}\,\bm{1}.
\]
\end{enumerate}
\end{lemma}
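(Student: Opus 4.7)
The plan is to exploit the very simple spectral structure of $J$: since $J$ is the rank-one matrix with $J\bm{1} = m\bm{1}$, the space $\mathbb{R}^m$ decomposes orthogonally as
\[
\mathbb{R}^m = \mathbb{R}\bm{1} \oplus \mathrm{Ker}\, J,
\]
with $\dim \mathrm{Ker}\, J = m-1$, and $J$ acts as multiplication by $m$ on the first summand and by $0$ on the second. Consequently $J - \alpha I$ acts as multiplication by $m-\alpha$ on $\mathbb{R}\bm{1}$ and by $-\alpha$ on $\mathrm{Ker}\, J$, which reduces the equation to two decoupled scalar/vector conditions.

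Concretely, I would write the unknown as $\bm{x} = c\bm{1} + \bm{y}$ with $\bm{y} \in \mathrm{Ker}\, J$ and view the right-hand side $\beta\bm{1}$ as already lying in the first summand. Substituting and projecting onto the two summands yields
\[
c(m-\alpha) = \beta, \qquad \alpha\, \bm{y} = \bm{0}.
\]
From here the four cases of the lemma fall out by inspection. In case (i), $\alpha = 0$ forces $c = \beta/m$ but leaves $\bm{y}$ arbitrary in $\mathrm{Ker}\, J$. In cases (ii)--(iii), $\alpha = m$ forces $\bm{y} = \bm{0}$ via the second equation, while the first equation degenerates to $0 = \beta$, so either $\beta = 0$ and $c$ is free (yielding $\bm{x} = c\bm{1}$), or $\beta \neq 0$ and no solution exists. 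In case (iv), both $c = \beta/(m-\alpha)$ and $\bm{y} = \bm{0}$ are uniquely determined.

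There is essentially no serious obstacle here: the lemma is a short exercise once the decomposition is in place. The only point requiring a little care is the degenerate boundary $m = 1$, where $\mathrm{Ker}\, J = \{\bm{0}\}$, so the ``$\bm{y}$ is free'' clause of case (i) becomes vacuous and the solution is automatically unique. I would mention this explicitly at the end of case (i), matching the parenthetical remark in the lemma statement, and otherwise keep the write-up as the three-line direct-sum argument sketched above.
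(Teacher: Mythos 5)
Your argument is correct. Note that the paper itself offers no proof of this lemma --- it is stated as ``elementary'' and used immediately afterwards --- so there is nothing to compare against; your write-up simply supplies the missing verification. The decomposition $\mathbb{R}^m = \mathbb{R}\bm{1} \oplus \mathrm{Ker}\,J$, on which $J-\alpha I$ acts by $m-\alpha$ and $-\alpha$ respectively, reduces the system to the two conditions $c(m-\alpha)=\beta$ and $\alpha\bm{y}=\bm{0}$, and all four cases (including the degenerate $m=1$ remark in case (i)) follow exactly as you describe.
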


Suppose that a real number $\lambda$ appears in $\mathcal{S}$,
i.e., $(x_0,\bm{x},\lambda,\mu)\in\mathcal{S}$ for some
$x_0,\bm{x},\mu$,
and that $\lambda\not\in \{a_j\,,a_jd_j+a_j\,;\,1\le j\le r\}$.
It then follows from Lemma \ref{02lem:05} (iv) that
\eqref{02eqn:key equation} admits a unique solution
\begin{equation}\label{02eqn:sol x_j}
\bm{x}_j=\frac{\mu}{2(a_jd_j+a_j-\lambda)}\,\bm{1}_j,
\qquad
1\le j\le r.
\end{equation}
Since $\lambda\neq 0$, which is directly verified
or by Proposition \ref{03prop:M(d,a)le min},
\eqref{vectorequation0} becomes
\begin{equation}\label{02eqn:sol x_0}
x_0=-\frac{\mu}{2\lambda}\,.
\end{equation}
Inserting \eqref{02eqn:sol x_j} and \eqref{02eqn:sol x_0}
into condition \eqref{03eqn:condition (2)}, we have
\begin{equation}\label{03eqn:key equation for lambda (-1)}
\sum_{j=1}^r \frac{d_j\mu}{2(a_jd_j+a_j-\lambda)}
-\frac{\mu}{2\lambda}=0.
\end{equation}
We see from \eqref{02eqn:sol x_j} and \eqref{02eqn:sol x_0}
together with \eqref{03eqn:condition (1)} that $\mu\neq0$.
Hence \eqref{03eqn:key equation for lambda (-1)} is equivalent to
\begin{equation}\label{02eqn:key equation for lambda}
\sum_{j=1}^r \frac{d_j}{a_jd_j+a_j-\lambda}
=\frac{1}{\lambda}\,.
\end{equation}
Thus, $\lambda$ is a solution of \eqref{02eqn:key equation for lambda}.

Conversely, with any solution $\lambda$ of
\eqref{02eqn:key equation for lambda}
we may associate $\mu$ in such a way that
\eqref{02eqn:sol x_j} and \eqref{02eqn:sol x_0} satisfy
condition \eqref{03eqn:condition (1)}.
In other words, every solution $\lambda$ of
\eqref{02eqn:key equation for lambda} appears in $\mathcal{S}$.
Consequently,
\begin{align}
\{\lambda_1,\dots,\lambda_s\}
&\subset \{\lambda\,;\, (x_0,\bm{x},\lambda,\mu)\in\mathcal{S}\}
\nonumber \\
&\subset
\{\lambda_1,\dots,\lambda_s\}
\cup
\{a_j\,,\, a_jd_j+a_j\,;\, 1\le j\le r\},
\label{02eqn:determining S}
\end{align}
where $\lambda_1<\dotsb<\lambda_s$ are the solutions of
\eqref{02eqn:key equation for lambda},
see Proposition \ref{01prop:01}.

We are now in a position to determine
\[
M(\bm{d},\bm{a})
=\min\{\lambda\,;\,
(x_0,\bm{x},\lambda,\mu)\in\mathcal{S}\},
\]
see Lemma \ref{02lem:values at stationary point}.
Since
\[
M(\bm{d},\bm{a})
\le \min\{a_1,\dots,a_r\}
\]
by Proposition \ref{03prop:M(d,a)le min},
it follows from \eqref{02eqn:determining S} that
\[
\min\{\lambda\,;\,(x_0,\bm{x},\lambda,\mu)\in\mathcal{S}\}
=\min\{\lambda_1,\dots,\lambda_s\}
=\lambda_1\,,
\]
where $\lambda_1$ is the minimal solution
of \eqref{02eqn:key equation for lambda}.
Consequently, $M(\bm{d},\bm{a})=\lambda_1$ as desired.

\subsection{An infinite case}

\begin{proposition}\label{gotoinfinitypropmin}
Let $r\ge2$.
Assume that $a_j>0$ and $1\le d_j\le\infty$ for all $1\le j\le r$.
Then
\begin{equation}\label{03eqn:M=inf}
M(\bm{d},\bm{a})
=\inf\left\{M(\bm{e},\bm{a})\,;\,
\begin{array}{l}
\bm{e}=(e_1,\dots,e_r)\le\bm{d},\\
e_1<\infty, \dots e_r<\infty 
\end{array}
\right\} 
\end{equation}
Moreover,
\begin{equation}\label{03eqn:M=lim}
M(\bm{d},\bm{a})
=\lim_{n\to\infty}M(\bm{d}\wedge \bm{n},\bm{a}),
\end{equation}
where $\bm{d}\wedge \bm{n}
=(d_1\wedge n,\dots,d_r\wedge n)$.
\end{proposition}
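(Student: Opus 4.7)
The plan is to prove both identities \eqref{03eqn:M=inf} and \eqref{03eqn:M=lim} simultaneously by combining the monotonicity of $M(\cdot,\bm{a})$ already established in this subsection with a truncation argument that exploits the standing finite-support hypothesis built into the domain of $\phi$.

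One direction is essentially formal. Since every $\bm{e}\le\bm{d}$ with finite entries satisfies $\bm{e}\le\bm{d}$, and in particular $\bm{d}\wedge\bm{n}\le\bm{d}$ for every $n$, the monotonicity statement above gives $M(\bm{d},\bm{a})\le M(\bm{e},\bm{a})$ for every admissible $\bm{e}$ and $M(\bm{d},\bm{a})\le M(\bm{d}\wedge\bm{n},\bm{a})$ for every $n$. Since $\bm{d}\wedge\bm{n}$ is coordinatewise nondecreasing in $n$, the sequence $M(\bm{d}\wedge\bm{n},\bm{a})$ is nonincreasing and its limit equals its infimum, which is therefore bounded below by $M(\bm{d},\bm{a})$. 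Taking the infimum over $\bm{e}$ likewise yields $M(\bm{d},\bm{a})\le\inf_{\bm{e}}M(\bm{e},\bm{a})$.

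For the reverse inequality I would appeal directly to the variational definition. Given $\varepsilon>0$, pick an admissible $(x_0,\bm{x}_1,\dots,\bm{x}_r)$, i.e.\ satisfying \eqref{03eqn:condition (1)} and \eqref{03eqn:condition (2)}, with $\phi(x_0,\bm{x})<M(\bm{d},\bm{a})+\varepsilon$. Each $\bm{x}_j$ has finite support by hypothesis, so one can choose an integer $N$ with $N\ge d_j$ for every index $j$ with $d_j<\infty$ and with the support of $\bm{x}_j$ contained in the first $N$ entries for every index $j$ with $d_j=\infty$. For any $n\ge N$ the same vector $(x_0,\bm{x})$ can be regarded as a finite-support vector in the ambient space associated with $\bm{d}\wedge\bm{n}$, trivially when $d_j\le n$ and by restriction to the first $n$ coordinates when $d_j=\infty$; under this identification the inner products $\langle\bm{x}_j,\bm{x}_j\rangle$ and $\langle\bm{1}_j,\bm{x}_j\rangle$ are unchanged, so the constraints \eqref{03eqn:condition (1)} and \eqref{03eqn:condition (2)} still hold and $\phi$ takes the same value. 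Hence $M(\bm{d}\wedge\bm{n},\bm{a})\le\phi(x_0,\bm{x})<M(\bm{d},\bm{a})+\varepsilon$, and letting $n\to\infty$ and then $\varepsilon\to 0$ gives $\lim_n M(\bm{d}\wedge\bm{n},\bm{a})\le M(\bm{d},\bm{a})$. The same truncation (with $\bm{e}=\bm{d}\wedge\bm{n}$) shows $\inf_{\bm{e}}M(\bm{e},\bm{a})\le M(\bm{d},\bm{a})$ as well, which together with the previous paragraph yields both \eqref{03eqn:M=inf} and \eqref{03eqn:M=lim}.

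There is no substantial analytic obstacle here; the only point requiring care is the bookkeeping in the truncation step, where the finite-support hypothesis on admissible vectors is used in an essential way to guarantee that the threshold $N$ exists and that restriction to the first $n\ge N$ coordinates leaves both the constraints and the value of $\phi$ intact.
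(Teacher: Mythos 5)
Your proposal is correct and follows essentially the same route as the paper: one inequality from the monotonicity of $M(\cdot,\bm{a})$ in $\bm{d}$, and the other by taking a near-optimal finitely supported test vector and observing that it already lives in the ambient space of some finite truncation $\bm{e}\le\bm{d}$ (the paper phrases this with a general $\bm{e}$ and derives the limit statement as an immediate consequence, while you work with $\bm{d}\wedge\bm{n}$ directly, but the argument is the same).
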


\begin{proof}
Denote by $\mu$ the right-hand side of \eqref{03eqn:M=inf}.
If $\bm{e}=(e_1,\dots,e_r)$ satisfies
$e_j<\infty$ and $e_j\le d_j$ for all $1\le j\le r$,
by definition we have
$M(\bm{d},\bm{a})\le M(\bm{e},\bm{a})$.
Therefore, the inequality $M(\bm{d},\bm{a})\le\mu$ holds.
On the other hand,
for any $\epsilon>0$ 
there exists a vector
$(x_0,\bm{x})$ with finite supports
such that $\phi(x_0,\bm{x})\le M(\bm{d},\bm{a})+\epsilon$.
Choosing $\bm{e}=(e_1,\dots,e_r)$ with
$e_j<\infty$ and $e_j\le d_j$ for all $1\le j\le r$
such that
$\bm{x}\in\mathbb{R}^{e_1}
\times\dots\times\mathbb{R}^{e_r}$,
we have $M(\bm{e},\bm{a})\le \phi(x_0,\bm{x})$.
Hence $\mu\le M(\bm{e},\bm{a})\le M(\bm{d},\bm{a})+\epsilon$
so that $\mu\le M(\bm{d},\bm{a})$.
Consequently, $\mu= M(\bm{d},\bm{a})$ and \eqref{03eqn:M=inf}
is proved.
Then \eqref{03eqn:M=lim} is now immediate.
\end{proof}

We now complete the proof of 
Theorem \ref{02thm:Characterization of M(d,a)}.
Let $r\ge2$ and suppose that 
$a_j>0$ and $1\le d_j\le\infty$ for all $1\le j\le r$.
It follows from the proved part of
Theorem \ref{02thm:Characterization of M(d,a)}
that 
\[
M(\bm{d}\wedge\bm{n},\bm{a})=\lambda_1(\bm{d}\wedge\bm{n},\bm{a}).
\]
Letting $n\rightarrow\infty$ with the help of
Propositions \ref{gotoinfinitypropeq}
and \ref{gotoinfinitypropmin} we obtain
\[
M(\bm{d},\bm{a})=\lambda_1(\bm{d},\bm{a}),
\]
as desired.

\subsection{Estimates of $M(\bm{d},\bm{a})$}

Having established in
Theorem \ref{02thm:Characterization of M(d,a)} 
the relation $M(\bm{d},\bm{a})=\lambda_1(\bm{d},\bm{a})$,
we may apply the results in Section \ref{Sec:Preliminaries} to 
obtain various estimates of $M(\bm{d},\bm{a})$.
Here we only mention the most basic result,
which follows directly from Proposition \ref{02prop:020}.

\begin{theorem}\label{thm:key estimate}
Let $r\ge2$.
Assume that $a_j>0$ and $1\le d_j\le\infty$ for all $1\le j\le r$.
Then we have
\begin{equation}\label{eqn:key estimate}
\left(\dfrac{1}{a_1}+\dots+\dfrac{1}{a_r}\right)^{-1}
\le M(\bm{d},\bm{a})
< \min\{a_1,\dots,a_r\},
\end{equation}
where the equality holds if and only if $d_1=\dots=d_r=\infty$.
\end{theorem}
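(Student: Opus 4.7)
The plan is to derive this theorem as an immediate corollary of two results already in hand, so no new analysis is required. First I would invoke Theorem~\ref{02thm:Characterization of M(d,a)}, which under the hypotheses $r\ge 2$, $a_j>0$, $1\le d_j\le\infty$ furnishes the identity
\[
M(\bm{d},\bm{a}) = \lambda_1(\bm{d},\bm{a}),
\]
thereby transporting the question about the variational quantity $M(\bm{d},\bm{a})$ into a question about the minimal positive root $\lambda_1(\bm{d},\bm{a})$ of the algebraic equation \eqref{01eqn:abstractimportantequation}.

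Next I would apply Proposition~\ref{02prop:020} directly. That proposition asserts exactly
\[
\left(\frac{1}{a_1}+\dots+\frac{1}{a_r}\right)^{-1}
\le \lambda_1(\bm{d},\bm{a})
< \min\{a_1,\dots,a_r\},
\]
with equality on the left if and only if $d_1=\dots=d_r=\infty$. The hypothesis bookkeeping is trivial: Proposition~\ref{02prop:020} requires $r\ge 2$, $a_j>0$, and $d_j\in(0,\infty]$, all of which are implied by the present assumption $d_j\in\{1,2,\dots\}\cup\{\infty\}$. Substituting the identity $M=\lambda_1$ into these inequalities and transporting the equality characterization yields \eqref{eqn:key estimate} verbatim.

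The only step that merits any comment at all is the verification that the equality characterization transfers cleanly: since $M(\bm{d},\bm{a})$ and $\lambda_1(\bm{d},\bm{a})$ are literally the same number under the stated hypotheses, the equality condition for the left-hand inequality is identical in both formulations. There is no genuine obstacle; the content of this theorem is merely the packaging of Theorem~\ref{02thm:Characterization of M(d,a)} and Proposition~\ref{02prop:020} into a single statement about the conditional infimum $M(\bm{d},\bm{a})$, which will be the form directly applied in Section~\ref{Sec:Star product graphs}.
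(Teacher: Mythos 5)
Your proposal is correct and matches the paper exactly: the paper states that Theorem~\ref{thm:key estimate} follows directly from the identity $M(\bm{d},\bm{a})=\lambda_1(\bm{d},\bm{a})$ of Theorem~\ref{02thm:Characterization of M(d,a)} combined with Proposition~\ref{02prop:020}. Nothing further is needed.
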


\section{Star product graphs}
\label{Sec:Star product graphs}

Let $r\ge1$ be a natural number.
For each $1\le j\le r$ let
$G_j = (V_j, E_j)$ be a connected graph
with distinguished vertex $o_j\in V_j$.
The star product
\begin{equation}\label{04:star product notation}
(G_1,o_1)\star\dotsb\star (G_r,o_r)
\end{equation}
is by definition a graph $G=(V,E)$ obtained by glueing
graphs $G_j$ at the vertices $o_j$.
Although the star product depends on the choice of the
distinguished vertices,
we write
\[
G=G_1\star\dotsb\star G_r
\]
whenever there is no danger of confusion.
It is convenient to understand
the set $V$ of vertices of $G=G_1\star\dotsb\star G_r$ as
a disjoint union:
\[
V=\{o\}\cup \bigcup_{j=1}^r V_j\backslash\{o_j\},
\]
where $o$ is identified with the glued vertices $o_j\in V_j$.
Let $D_j=[d_j(x,y)]$ and $D=[d(x,y)]$ be
the distance matrices of $G_j$ and $G$, respectively.
Apparently,
\begin{equation}\label{03eqn:distance of star product}
d(x,y)=\begin{cases}
d_j(x,y), & \text{if $x,y\in V_j$}, \\
d_i(x,o)+d_j(o,y),
& \text{if $x\in V_i$ and $y\in V_j$, $i\neq j$}.
\end{cases}
\end{equation}
We are interested in a good estimate
of $\mathrm{QEC}(G_1\star \dotsb\star G_r)$
in terms of $Q_j=\mathrm{QEC}(G_j)$.

We need a general notion.
Let $G=(V,E)$ be a connected graph and $H=(W,F)$ a connected subgraph.
Let $D$ and $D_H$ be the distance matrices of $G$ and $H$,
respectively.
We say that $H$ is \textit{isometrically embedded} in $G$ if
$D_H(x,y)=D(x,y)$ for any $x.y\in W$.
In that case, $H$ is the induced subgraph of $G$ spanned by $W$,
but the converse assertion is not true.

\begin{proposition}\label{04prop:isometrically embedded subgraphs}
Let $G$ be a connected graph and $H$ a connected subgraph.
If $H$ is isometrically embedded in $G$,
we have $\mathrm{QEC}(H)\le\mathrm{QEC}(G)$.
\end{proposition}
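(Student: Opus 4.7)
The plan is to prove the inequality by the standard zero-extension argument: every admissible test function on $W$ extends to an admissible test function on $V$ that produces the same value of the quadratic form, so the supremum over $V$ can only be larger.

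More concretely, I would first fix an arbitrary $f\in C_0(W)$ satisfying $\langle f,f\rangle=1$ and $\langle\mathbf{1}_W,f\rangle=0$, where $\mathbf{1}_W$ is the function constantly equal to $1$ on $W$. Then I would define $\tilde f\in C_0(V)$ by $\tilde f(x)=f(x)$ for $x\in W$ and $\tilde f(x)=0$ otherwise. Clearly $\tilde f$ has finite support, and the normalizations are preserved:
\[
\langle\tilde f,\tilde f\rangle=\langle f,f\rangle=1,
\qquad
\langle\mathbf{1}_V,\tilde f\rangle=\langle\mathbf{1}_W,f\rangle=0.
\]
So $\tilde f$ is an admissible test function in the definition of $\mathrm{QEC}(G)$.

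The key step is to compare the two quadratic forms. By the very definition of isometric embedding, $D_H(x,y)=D(x,y)$ for all $x,y\in W$. Therefore
\[
\langle\tilde f,D\tilde f\rangle
=\sum_{x,y\in V}\tilde f(x)D(x,y)\tilde f(y)
=\sum_{x,y\in W}f(x)D(x,y)f(y)
=\sum_{x,y\in W}f(x)D_H(x,y)f(y)
=\langle f,D_H f\rangle.
\]
Consequently $\langle f,D_H f\rangle\le\mathrm{QEC}(G)$, and taking the supremum over all admissible $f\in C_0(W)$ yields $\mathrm{QEC}(H)\le\mathrm{QEC}(G)$.

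There is essentially no obstacle here; the entire argument is a bookkeeping check that the zero-extension preserves the norm, the orthogonality to $\mathbf{1}$, and the value of the quadratic form. The only place where a subtlety could sneak in is confusing the distance matrix of $H$ (which uses walks inside $H$) with the restriction of $D$ to $W\times W$ (which uses walks inside $G$), but the isometric embedding hypothesis is exactly what identifies these two, so the proof goes through directly without any estimates on $\lambda_1$ or on the auxiliary quantities developed in the earlier sections.
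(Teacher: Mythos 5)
Your argument is correct and is precisely the standard zero-extension argument that the paper has in mind when it dismisses the proof as ``straightforward from definition'' (citing \cite{Obata-Zakiyyah2017}); the isometric-embedding hypothesis is used exactly where it must be, to identify $D_H$ with the restriction of $D$ to $W\times W$. Nothing further is needed.
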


\begin{proof}
Straightforward from definition,
see also \cite{Obata-Zakiyyah2017}.
\end{proof}

\begin{proposition}\label{04prop:estimate from below}
Let $r\ge1$.
For $1\le j\le r$ let $G_j = (V_j, E_j)$ be a 
(finite or infinite) connected graph.
Then we have
\[
\max\{Q_1,\dots,Q_r\}\le\mathrm{QEC}(G_1\star\dots\star G_r).
\]
\end{proposition}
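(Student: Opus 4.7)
The plan is to invoke Proposition \ref{04prop:isometrically embedded subgraphs} after verifying that each factor $G_j$ sits isometrically inside the star product $G=G_1\star\dotsb\star G_r$.

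First I would note that, under the identification of $V_j$ with its natural image in $V$ (where $o_j$ becomes $o$), the induced subgraph of $G$ spanned by $V_j$ is exactly $G_j$, since the edges of $G$ incident to vertices in $V_j\setminus\{o\}$ come only from $E_j$. The crucial point is the isometry, which is given immediately by the first case of formula \eqref{03eqn:distance of star product}: for any $x,y\in V_j$ one has $d(x,y)=d_j(x,y)$. Hence $G_j$ is isometrically embedded in $G$ in the sense defined just before Proposition \ref{04prop:isometrically embedded subgraphs}.

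Applying Proposition \ref{04prop:isometrically embedded subgraphs} then yields
\[
Q_j=\mathrm{QEC}(G_j)\le\mathrm{QEC}(G_1\star\dotsb\star G_r)
\]
for each $1\le j\le r$. Taking the maximum over $j$ gives the desired inequality.

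There is essentially no obstacle here: the entire content has been packaged into Proposition \ref{04prop:isometrically embedded subgraphs} together with the distance formula \eqref{03eqn:distance of star product}. The only thing worth being careful about is the trivial observation that a function $f\in C_0(V_j)$ extends by zero to an element of $C_0(V)$ with the same norm and the same value of $\langle\1,f\rangle$, so that the supremum defining $\mathrm{QEC}(G_j)$ in \eqref{01eqn:QEC} is taken over a subset of admissible test functions for $\mathrm{QEC}(G)$; this is exactly what makes Proposition \ref{04prop:isometrically embedded subgraphs} work and requires no further argument here.
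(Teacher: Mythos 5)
Your proof is correct and follows exactly the paper's own argument: observe via \eqref{03eqn:distance of star product} that each $G_j$ is isometrically embedded in the star product, apply Proposition \ref{04prop:isometrically embedded subgraphs}, and take the maximum over $j$. No issues.
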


\begin{proof}
It is obvious by definition of
star product each $G_j$ is isometrically embedded in 
$G=G_1\star\dotsb\star G_r$,
see also \eqref{03eqn:distance of star product}.
Then by Proposition
\ref{04prop:isometrically embedded subgraphs},
we have $Q_j\le \mathrm{QEC}(G)$ for all $1\le j\le r$
and hence $\max\{Q_1,\dots,Q_r\}\le \mathrm{QEC}(G)$.
\end{proof}

An estimate $\mathrm{QEC}(G_1\star\dots\star G_r)$
from above is much harder to obtain.
We start with the case where all factors $G_j$ are finite graphs.

\begin{proposition}\label{03prop:general star products}
Let $r\ge1$.
For $1\le j\le r$ let $G_j = (V_j, E_j)$ be a 
connected graph on $n_j+1=|V_j|\ge2$ vertices
($n_j=\infty$ may happen) with QE constant $Q_j=\mathrm{QEC}(G_j)$.
Let $M=M(n_1,\dots,n_r;-Q_1,\dots,-Q_r)$ be the conditional infimum of
\begin{equation}\label{04eqn:quadratic function}
\phi(x_0,\bm{x})
=\sum_{j=1}^r (-Q_j)\,
\Big\{\langle \bm{x}_j, \bm{x_j}\rangle
  +\langle \bm{1},\bm{x}_j\rangle^2 \Big\},
\qquad
x_0\in\mathbb{R},
\quad
\bm{x}_j\in\mathbb{R}^{n_j},
\end{equation}
subject to
\begin{align}
&x_0^2+\sum_{j=1}^r \langle \bm{x}_j,\bm{x}_j\rangle=1,
\label{03eqn:condition 2}\\
&x_0+\sum_{j=1}^r \langle \bm{1}_j,\bm{x}_j\rangle=0.
\label{03eqn:condition 1}
\end{align}
Then we have
\begin{equation}\label{03eqn:general estimate}
\mathrm{QEC}(G_1\star\dotsb\star G_r)
\le -M.
\end{equation}
\end{proposition}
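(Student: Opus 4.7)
The plan is to bound $\langle f, Df \rangle$ for each admissible $f$ on $G = G_1\star\dotsb\star G_r$ by a quantity of the form $-\phi(x_0,\bm{x})$ and then take the supremum. The bridge is to extend $f$ independently on each factor to a sum-zero function, apply the QE bound on each factor, and verify that the corresponding quadratic forms add up correctly across the star product.

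Concretely, I would first fix $f\in C_0(V)$ with $\langle f,f\rangle=1$ and $\langle \1,f\rangle=0$, set $x_0 = f(o)$, and let $\bm{x}_j\in\mathbb{R}^{n_j}$ be the restriction of $f$ to $V_j\setminus\{o_j\}$. The two conditions on $f$ then become exactly \eqref{03eqn:condition 2} and \eqref{03eqn:condition 1}, so $(x_0,\bm{x})$ is admissible in the variational problem defining $M$. Next, for each $j$ I would define $g_j:V_j\to\mathbb{R}$ by $g_j(x)=f(x)$ for $x\in V_j\setminus\{o_j\}$ and $g_j(o_j)=-\langle \1_j,\bm{x}_j\rangle$, so that $g_j$ has zero sum on $V_j$. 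By definition of $Q_j=\mathrm{QEC}(G_j)$ applied to $g_j$,
\[
\langle g_j, D_j g_j\rangle \le Q_j\|g_j\|^2 = Q_j\bigl(\langle \1_j,\bm{x}_j\rangle^2 + \|\bm{x}_j\|^2\bigr).
\]

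The crucial step is to establish the identity
\[
\langle f, D f\rangle = \sum_{j=1}^r \langle g_j, D_j g_j\rangle.
\]
To prove it, I would split the double sum $\sum_{x,y\in V}f(x)f(y)d(x,y)$ according to the positions of $x$ and $y$ (equal to $o$, in the same factor, or in different factors), using the distance formula \eqref{03eqn:distance of star product}. Writing $S_j=\sum_{y\in V_j\setminus\{o_j\}}f(y)d_j(o_j,y)$ and $T_j=\langle \1_j,\bm{x}_j\rangle$, the $o$-to-$V_j$ contributions total $2x_0\sum_j S_j$, the intra-factor contributions total $\sum_j A_j$ with $A_j=\sum_{x,y\in V_j\setminus\{o_j\}}f(x)f(y)d_j(x,y)$, and the inter-factor contributions separate through the additivity $d(x,y)=d_i(x,o_i)+d_j(o_j,y)$ into $2\sum_{i\ne j}S_i T_j = 2(\sum_i S_i)(\sum_j T_j) - 2\sum_j S_j T_j$. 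The sum-zero constraint $\sum_j T_j=-x_0$ cancels the $x_0\sum_j S_j$ terms, leaving $\langle f,Df\rangle=\sum_j A_j - 2\sum_j S_j T_j$, which matches the analogous decomposition of $\sum_j \langle g_j, D_j g_j\rangle$ (where the $o_j$-to-$W_j$ terms contribute $2(-T_j)S_j$ and the rest is $A_j$). This bookkeeping of terms and cancellations is elementary but the most error-prone part, and is the main obstacle.

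Combining the identity with the per-factor bounds,
\[
\langle f, Df\rangle \le \sum_{j=1}^r Q_j\bigl(\langle \1_j,\bm{x}_j\rangle^2 + \|\bm{x}_j\|^2\bigr) = -\phi(x_0,\bm{x}) \le -M.
\]
Since this inequality holds for every admissible $f$, taking the supremum on the left yields $\mathrm{QEC}(G_1\star\dotsb\star G_r)\le -M$, as desired.
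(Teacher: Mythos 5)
Your proof is correct and follows essentially the same route as the paper: define zero-sum extensions $g_j$ of $f$ on each factor, prove $\langle f,Df\rangle=\sum_j\langle g_j,D_jg_j\rangle$, apply the per-factor QE bound, and identify the resulting quadratic form with $-\phi(x_0,\bm{x})\ge -M$. The only cosmetic difference is that the paper organizes the key identity via the decomposition $f=\sum_j f_j$ and shows the cross terms $\langle f_i,Df_j\rangle$ vanish using $\langle\1_j,f_j\rangle=0$, whereas you split the double sum by cases directly; your bookkeeping (the cancellation of the $x_0\sum_j S_j$ terms via $\sum_j T_j=-x_0$) checks out.
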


\begin{proof}
Set $G=G_1\star\dotsb \star G_r$ and $Q=\mathrm{QEC}(G)$ for simplicity.
We keep the notations
introduced in the first paragraph of this section.
Given $f\in C_0(V)$ satisfying
\begin{equation}\label{04eqn:cond (01)}
\langle f,f\rangle=1,
\qquad
\langle\bm{1},f\rangle=0,
\end{equation}
we define $f_j\in C_0(V)$ by
\begin{equation}\label{03eqn:def f_j}
f_j(x)=
\begin{cases}
f(x), & x\in V_j\backslash \{o_j\}, \\
-\displaystyle\sum_{x\in V_j\backslash\{o_j\}}^{\mathstrut}
f(x), & x=o, \\
0, &\text{otherwise}.
\end{cases}
\end{equation}
Using $\langle \1, f\rangle=0$ we obtain easily
\begin{equation}\label{03eqn:sum of f_j}
f(x)=\sum_{j=1}^r f_j(x),
\qquad x\in V.
\end{equation}
We show that
\begin{equation}\label{2eqn:splitting quadratic function D}
\langle f,Df\rangle
=\sum_{j=1}^r \langle f_j,D_jf_j\rangle_{V_j}\,.
\end{equation}
In fact, using \eqref{03eqn:sum of f_j} we have
\begin{equation}\label{03eqn:splitting quadratic function D(0)}
\langle f,Df\rangle
=\sum_{i,j=1}^r \langle f_i,Df_j\rangle
=\sum_{j=1}^r \langle f_j,Df_j\rangle
  +\sum_{i\neq j} \langle f_i,Df_j\rangle
\end{equation}
Since $f_j$ vanishes outside $V_j$, we have
\begin{align}
\langle f_j,Df_j\rangle
&=\sum_{x,y\in V_j} d(x,y)f_j(x)f_j(y)
\nonumber \\
&=\sum_{x,y\in V_j} d_j(x,y)f_j(x)f_j(y)
=\langle f_j,D_jf_j\rangle_{V_j}\,.
\label{03eqn:in proof (00)}
\end{align}
On the other hand, for $i\neq j$ using
\eqref{03eqn:distance of star product} and
\eqref{03eqn:condition for f_j} we obtain
\begin{align}
\langle f_i,Df_j\rangle
&=\sum_{x,y\in V} d(x,y)f_i(x)f_j(y)
\nonumber \\
&=\sum_{x\in V_i}\sum_{y\in V_j} (d_i(x,o)+d_j(o,y))f_i(x)f_j(y)
\nonumber \\
&=\sum_{x\in V_i}d_i(x,o)f_i(x)\sum_{y\in V_j}f_j(y)
 +\sum_{x\in V_i}f_i(x) \sum_{y\in V_j} d_j(o,y)f_j(y)
\nonumber \\
&=\langle \1_j, f_j\rangle_{V_j} \sum_{x\in V_i}d_i(x,o)f_i(x)
 +\langle \1_i, f_i\rangle_{V_i} \sum_{y\in V_j} d_j(o,y))f_j(y)
\nonumber \\
&=0.
\label{03eqn:in proof (02)}
\end{align}
Inserting \eqref{03eqn:in proof (00)} and
\eqref{03eqn:in proof (02)} into
\eqref{03eqn:splitting quadratic function D(0)},
we obtain \eqref{2eqn:splitting quadratic function D}.

Each $f_j$ defined by \eqref{03eqn:def f_j}
being regarded as a function in $C_0(V_j)$, we have
\begin{equation}\label{03eqn:condition for f_j}
\langle \1_j,f_j\rangle_{V_j}
=\sum_{x\in V_j}f_j(x)
=0.
\end{equation}
Then we have
\[
\langle f_j,D_jf_j\rangle_{V_j}
\le Q_j \langle f_j, f_j\rangle_{V_j}.
\]
and by \eqref{2eqn:splitting quadratic function D},
\begin{equation}\label{03eqn:in proof (03)}
\langle f,Df\rangle
\le \sum_{j=1}^r Q_j \langle f_j, f_j\rangle_{V_j}.
\end{equation}
Employing vector-notation,
we associate $(x_0,\bm{x}_1,\dots,\bm{x}_r)$ with
each $f\in C_0(V)$ in such a way that
\[
x_0=f(o),
\qquad
\bm{x}_j=\big[f(x)\,;\, x\in V_j\backslash\{o\} \big]
\in\mathbb{R}^{n_j}.
\]
Then every $\bm{x}_j$ has a finite support, and we come to
\begin{align*}
\langle f_j, f_j\rangle_{V_j}
&=\langle \bm{x}_j, \bm{x_j}\rangle +f_j(o)^2 \\
&=\langle \bm{x}_j, \bm{x_j}\rangle
+\Bigg(-\sum_{x\in V_j\backslash\{o_j\}}f(x)\Bigg)^2 \\
&=\langle \bm{x}_j, \bm{x_j}\rangle
  +\langle \bm{1}_j,\bm{x}_j\rangle^2.
\end{align*}
Then \eqref{03eqn:in proof (03)} becomes
\begin{equation}\label{3eqn:estimate of QEC}
\langle f,Df\rangle
\le \sum_{j=1}^r Q_j
\Big\{\langle \bm{x}_j, \bm{x_j}\rangle
  +\langle \bm{1},\bm{x}_j\rangle^2 \Big\},
\end{equation}
or equivalently,
\begin{equation}\label{04eqn:in proof (101)}
\langle f,Df\rangle
\le -\phi(x_0,\bm{x}),
\end{equation}
for any $f\in C_0(V)$ satisfying \eqref{04eqn:cond (01)},
which is equivalent to
\eqref{03eqn:condition 2} and \eqref{03eqn:condition 1}.
By definition of the QE constant, for any $\epsilon>0$
there exists $f\in C_0(V)$ satisfying \eqref{04eqn:cond (01)} such that
\[
Q-\epsilon\le \langle f,Df\rangle.
\]
In view of \eqref{04eqn:in proof (101)} we obtain
\[
Q-\epsilon\le -\phi(x_0,\bm{x})\le -M,
\]
where we used the obvious inequality 
$\phi(x_0,\bm{x})\ge M$ for any $(x_0,\bm{x})$ satisfying 
\eqref{03eqn:condition 2} and \eqref{03eqn:condition 1}.
Consequently, $Q\le -M$ as desired.
\end{proof}

We are now in a position to state the main results.

\begin{theorem}\label{01thm:QEC=0}
Let $r\ge1$.
For $1\le j\le r$ let $G_j = (V_j, E_j)$ be a 
connected graph on $|V_j|\ge2$ vertices ($|V_j|=\infty$ may happen).
Assume that every $G_j$ is of QE class with QE constant
$Q_j=\mathrm{QEC}(G_j)\le0$.
If $Q_j=0$ for some $j$, we have
$\mathrm{QEC}(G_1\star\dots\star G_r)=0$.
\end{theorem}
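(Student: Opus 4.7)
My plan is to sandwich $\mathrm{QEC}(G_1\star\dotsb\star G_r)$ between $0$ and $0$ by invoking the two estimates already proved in this section together with the trivial-case analysis of $M(\bm{d},\bm{a})$ from Section~\ref{sec:Conditional Minimum}.

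For the lower bound, I would apply Proposition~\ref{04prop:estimate from below} directly: since every $Q_j\le 0$ by the QE class assumption and at least one $Q_j=0$ by hypothesis, we have $\max\{Q_1,\dots,Q_r\}=0$, whence
\[
0=\max\{Q_1,\dots,Q_r\}\le \mathrm{QEC}(G_1\star\dotsb\star G_r).
\]

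For the upper bound, I would apply Proposition~\ref{03prop:general star products} with parameter vector $\bm{a}=(-Q_1,\dots,-Q_r)$, which satisfies $a_j\ge 0$ for all $j$ precisely because each $G_j$ is of QE class. This yields
\[
\mathrm{QEC}(G_1\star\dotsb\star G_r)\le -M(n_1,\dots,n_r;-Q_1,\dots,-Q_r).
\]
By assumption some $a_j=-Q_j=0$, so Proposition~\ref{03prop:trivial case} gives $M(\bm{d},\bm{a})=0$. Consequently $\mathrm{QEC}(G_1\star\dotsb\star G_r)\le 0$.

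Combining the two bounds forces equality, which is the desired conclusion. I do not expect a serious obstacle here: all the real work was done in establishing Propositions~\ref{04prop:estimate from below} and \ref{03prop:general star products}, and the infinite-vertex case is already accommodated in the statement of the latter, so the same argument covers the case $|V_j|=\infty$ without modification.
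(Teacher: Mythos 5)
Your proposal is correct and follows exactly the paper's own argument: the upper bound via Proposition~\ref{03prop:general star products} combined with Proposition~\ref{03prop:trivial case} (since some $-Q_j=0$ forces $M=0$), and the lower bound via Proposition~\ref{04prop:estimate from below}. There is nothing to add.
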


\begin{proof}
We apply Proposition \ref{03prop:general star products}.
By assumption the coefficients $-Q_j$ 
in the right-hand side of \eqref{04eqn:quadratic function} 
are all non-negative, and at least one $-Q_j$ vanishes.
It then follows from Proposition \ref{03prop:trivial case} that
the conditional infimum is zero, that is, $M=0$.
Hence by \eqref{03eqn:general estimate} we have
$\mathrm{QEC}(G_1\star\dotsb \star G_r)\le 0$.
On the other hand, it follows from
Proposition \ref{04prop:estimate from below} that
\[
0=\max\{Q_1,\dots,Q_r\}
\le \mathrm{QEC}(G_1\star\dotsb \star G_r).
\]
Hence $\mathrm{QEC}(G_1\star\dotsb \star G_r)=0$.
\end{proof}

\begin{theorem}\label{04thm:main estimate}
Let $r\ge1$.
For $1\le j\le r$ let $G_j = (V_j, E_j)$ be a 
connected graph on $n_j+1=|V_j|\ge2$ vertices ($n_j=\infty$ may happen).
Assume that every $G_j$ is of QE class with QE constant
$Q_j=\mathrm{QEC}(G_j)<0$.
Then we have
\begin{equation}\label{01eqn:main estimate}
\max\{Q_1,\dots,Q_r\}
\le\mathrm{QEC}(G_1\star\dots\star G_r)
\le -\Lambda,
\end{equation}
where $\Lambda$ is the minimal solution of
\begin{equation}\label{01eqn:Lambda as minimal solution}
\sum_{j=1}^r \frac{n_j}{-Q_jn_j-Q_j-\lambda}=\frac{1}{\lambda}\,.
\end{equation}
\end{theorem}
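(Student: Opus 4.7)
The plan is to decompose inequality \eqref{01eqn:main estimate} into its lower and upper halves and dispatch them separately using machinery already established. The lower bound $\max\{Q_1,\dots,Q_r\}\le\mathrm{QEC}(G_1\star\dots\star G_r)$ is precisely the content of Proposition~\ref{04prop:estimate from below}, so it may be quoted verbatim without additional work.

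For the upper bound, the key observation is that the quadratic form \eqref{04eqn:quadratic function} in Proposition~\ref{03prop:general star products} is exactly the function $\phi(x_0,\bm{x})$ of \eqref{02eqn:main quadratic form} with the substitution $\bm{a}=(-Q_1,\dots,-Q_r)$, and the constraints \eqref{03eqn:condition 2}--\eqref{03eqn:condition 1} match \eqref{03eqn:condition (1)}--\eqref{03eqn:condition (2)}. The hypotheses of Theorem~\ref{02thm:Characterization of M(d,a)} are fulfilled: since $Q_j<0$ we have $a_j=-Q_j>0$, and since $|V_j|\ge2$ we have $d_j=n_j\ge1$ (possibly $n_j=\infty$). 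Therefore I would apply Proposition~\ref{03prop:general star products} to obtain
\[
\mathrm{QEC}(G_1\star\dots\star G_r)\le -M(\bm{n},-\bm{Q}),
\]
and then apply Theorem~\ref{02thm:Characterization of M(d,a)} to identify $M(\bm{n},-\bm{Q})=\lambda_1(\bm{n},-\bm{Q})$, which by definition is the minimal solution of \eqref{01eqn:Lambda as minimal solution}, i.e. exactly $\Lambda$. Chaining these two steps yields $\mathrm{QEC}(G_1\star\dots\star G_r)\le -\Lambda$.

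Two small points require a brief sanity check rather than genuine obstacles. First, the case $r=1$: Theorem~\ref{02thm:Characterization of M(d,a)} and Proposition~\ref{03prop:trivial case(2)} agree that $M=a_1=-Q_1$, so the upper bound specialises to $\mathrm{QEC}(G_1)\le Q_1$, which together with the lower bound recovers the tautology $\mathrm{QEC}(G_1)=Q_1$ (consistent with \eqref{02eqn:lambda for r=1}). Second, the infinite case $n_j=\infty$: here one must know that Theorem~\ref{02thm:Characterization of M(d,a)} applies, and indeed it does, since it was extended to $1\le d_j\le\infty$ via the limiting argument of Propositions~\ref{gotoinfinitypropeq} and \ref{gotoinfinitypropmin}. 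No further estimate is needed because the identification $M=\lambda_1$ is already established there.

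Since both ingredients (Proposition~\ref{03prop:general star products} and Theorem~\ref{02thm:Characterization of M(d,a)}) have been proved, there is no real obstacle remaining; the hardest aspect is simply bookkeeping the translation of notation between the graph-theoretic side ($n_j,Q_j$) and the abstract side ($d_j,a_j$). The proof will therefore be very short, essentially a two-line concatenation plus the specialisation of the minimal-solution equation to the substituted parameters.
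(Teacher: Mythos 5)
Your proposal is correct and follows essentially the same route as the paper: the lower bound is quoted from Proposition~\ref{04prop:estimate from below}, and the upper bound is obtained by chaining Proposition~\ref{03prop:general star products} with the identification $M(\bm{n},-\bm{Q})=\lambda_1(\bm{n},-\bm{Q})=\Lambda$ from Theorem~\ref{02thm:Characterization of M(d,a)}. The additional sanity checks on the $r=1$ and $n_j=\infty$ cases are consistent with the paper's treatment and do not change the argument.
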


\begin{proof}
The left half of \eqref{01eqn:main estimate} is already 
shown in Proposition \ref{04prop:estimate from below}.
We will show the right half.
We first see from Proposition \ref{03prop:general star products} that
\[
\mathrm{QEC}(G_1\star\dotsb\star G_r)
\le -M,
\]
where $M=M(n_1,\dots,n_r;-Q_1,\dots,-Q_r)$
is the conditional infimum of \eqref{04eqn:quadratic function}
subject to \eqref{03eqn:condition 2} and \eqref{03eqn:condition 1}.
On the other hand, in case where $Q_j<0$ for all $1\le j\le r$,
$M$ coincides with the minimal solution of
\eqref{01eqn:Lambda as minimal solution} by Theorem
\ref{02thm:Characterization of M(d,a)}.
Thus, \eqref{01eqn:main estimate} follows.
\end{proof}

\begin{corollary}\label{04cor:r-star product}
We keep the notations and assumptions as in Theorem \ref{04thm:main estimate}.
If $r\ge2$, we have
\begin{equation}
\mathrm{QEC}(G_1\star\dots\star G_r)
\le\left(\frac{1}{Q_1}+\dotsb+\frac{1}{Q_r}\right)^{-1}<0.
\end{equation}
\end{corollary}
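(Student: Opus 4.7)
The plan is to chain together Theorem \ref{04thm:main estimate} with the lower bound on the minimal solution provided by Proposition \ref{02prop:020}, then reinterpret the resulting inequality in terms of the $Q_j$'s by sign-flipping.

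First, I would invoke Theorem \ref{04thm:main estimate} to write $\mathrm{QEC}(G_1\star\dots\star G_r)\le -\Lambda$, where $\Lambda=\lambda_1(\bm{n},-\bm{Q})$ is the minimal positive solution of \eqref{01eqn:Lambda as minimal solution}. This identification is legitimate because the assumption $Q_j<0$ for every $j$ guarantees that the coefficients $a_j=-Q_j$ are strictly positive, placing us in the setting of Section \ref{Sec:Preliminaries}, and the $n_j$'s satisfy $1\le n_j\le\infty$ since $|V_j|\ge2$.

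Next, I would apply Proposition \ref{02prop:020} (the left-hand inequality in \eqref{02eqn:simple estimate of lambda_1}) to this $\Lambda$, yielding
\[
\Lambda \;\ge\; \left(\frac{1}{-Q_1}+\dots+\frac{1}{-Q_r}\right)^{-1}.
\]
This uses $r\ge2$, which is exactly the hypothesis of the corollary. Now since each $Q_j<0$, the sum $\sum_{j=1}^r 1/Q_j$ is a strictly negative real number, and
\[
\sum_{j=1}^r \frac{1}{-Q_j} = -\sum_{j=1}^r \frac{1}{Q_j} > 0,
\]
so taking reciprocals gives $\left(\sum_j 1/(-Q_j)\right)^{-1} = -\left(\sum_j 1/Q_j\right)^{-1}$, which is strictly positive.

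Negating both sides of the estimate on $\Lambda$ and combining with the first step yields
\[
\mathrm{QEC}(G_1\star\dots\star G_r)\;\le\;-\Lambda\;\le\;\left(\frac{1}{Q_1}+\dots+\frac{1}{Q_r}\right)^{-1},
\]
and the right-hand side is strictly negative as observed above. There is no real obstacle here: the whole content of the corollary is a bookkeeping consequence of chaining Theorem \ref{04thm:main estimate} with the previously-established lower bound \eqref{02eqn:simple estimate of lambda_1} and tracking the signs when passing between $-Q_j$ and $Q_j$.
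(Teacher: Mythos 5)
Your proof is correct and follows essentially the same route as the paper: the paper cites Theorem \ref{thm:key estimate} (which is just Proposition \ref{02prop:020} restated for $M(\bm{d},\bm{a})=\lambda_1(\bm{d},\bm{a})$) together with Theorem \ref{04thm:main estimate}, while you invoke Proposition \ref{02prop:020} directly on $\Lambda$ and track the sign flip from $-Q_j$ to $Q_j$ — the same chain of inequalities.
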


\begin{proof}
Immediate from Theorems \ref{thm:key estimate}
and \ref{04thm:main estimate}.
\end{proof}

\begin{corollary}\label{3cor:strict}
For $j=1,2$ let $G_j = (V_j, E_j)$
be a (finite or infinite) connected graph on $n_j+1=|V_j|\ge2$ vertices.
Assume that each $G_j$ is of QE class with QE constant
$Q_j=\mathrm{QEC}(G_j)<0$.
Then we have
\begin{equation}\label{04eqn:main estimate}
\max\{Q_1,Q_2\}
\le\mathrm{QEC}(G_1\star G_2)
\le Q_{12},
\end{equation}
where $Q_{12}$ is defined by
\begin{equation}\label{01eqn:Q_12}
Q_{12}=\frac{2Q_1Q_2}{Q_1+Q_2-\sqrt{(Q_1+Q_2)^2
-\dfrac{4(n_1+n_2+1)}{(n_1+1)(n_2+1)}\, Q_1Q_2}}\,.
\end{equation}
Moreover,
\begin{equation}\label{01eqn:estimate for Q_12}
\max\{Q_1,Q_2\}< Q_{12}<0.
\end{equation}
\end{corollary}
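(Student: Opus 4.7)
The corollary is the $r=2$ specialization of Theorem \ref{04thm:main estimate}, so the plan is to substitute $r=2$, translate the minimal-solution data into the closed-form expression (\ref{02eqn:lambda for r=2}), and then harvest strictness from Theorem \ref{thm:key estimate}.

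\medskip

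\textbf{Step 1 (main estimate from the theorem).} Apply Theorem~\ref{04thm:main estimate} with $r=2$, $a_j=-Q_j>0$, and $d_j=n_j\in\{1,2,\dots,\infty\}$. This directly yields
\[
\max\{Q_1,Q_2\}\le\mathrm{QEC}(G_1\star G_2)\le -\Lambda,
\]
where $\Lambda=\lambda_1(\bm{n},-\bm{Q})$ is the minimal positive solution of (\ref{01eqn:Lambda as minimal solution}).

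\medskip

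\textbf{Step 2 (identifying $-\Lambda$ with $Q_{12}$).} For $r=2$, formula (\ref{02eqn:lambda for r=2}) gives the explicit expression
\[
\Lambda=\frac{2(-Q_1)(-Q_2)}{(-Q_1)+(-Q_2)+\sqrt{\bigl((-Q_1)+(-Q_2)\bigr)^2-\dfrac{4(n_1+n_2+1)}{(n_1+1)(n_2+1)}(-Q_1)(-Q_2)}}\,.
\]
Since $(-Q_1)(-Q_2)=Q_1Q_2$ and $(-Q_1)+(-Q_2)=-(Q_1+Q_2)$, multiplying numerator and denominator by $-1$ produces exactly (\ref{01eqn:Q_12}), i.e.\ $-\Lambda=Q_{12}$. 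This step is just bookkeeping with signs; the discriminant under the square root is unchanged because $(Q_1+Q_2)^2=(-Q_1-Q_2)^2$. The non-negativity of the discriminant (needed for $Q_{12}$ to be real) follows from the fact that $\Lambda$ is real positive, which is already guaranteed by Proposition~\ref{01prop:01}.

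\medskip

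\textbf{Step 3 (strict inequalities in (\ref{01eqn:estimate for Q_12})).} By Proposition~\ref{01prop:01} the minimal solution $\Lambda$ lies in the open interval $(0,c_1)$, so $\Lambda>0$ and hence $Q_{12}=-\Lambda<0$. For the other strict inequality, apply Theorem~\ref{thm:key estimate} (with $r=2$): since we cannot have $n_1=n_2=\infty$ forced, and more importantly the upper bound there is strict, we obtain
\[
\Lambda=M(\bm{n},-\bm{Q})<\min\{-Q_1,-Q_2\}=-\max\{Q_1,Q_2\},
\]
so $Q_{12}=-\Lambda>\max\{Q_1,Q_2\}$. (The lower-bound equality case in Theorem~\ref{thm:key estimate}, requiring $n_1=n_2=\infty$, is not needed here.) Combining these gives $\max\{Q_1,Q_2\}<Q_{12}<0$, completing the proof.

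\medskip

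\textbf{Expected difficulty.} Nothing in this proof is substantive; every ingredient is already in place. The only thing to be careful about is the sign manipulation in Step~2 to verify that the denominator $(-Q_1)+(-Q_2)+\sqrt{\cdots}$ of $\Lambda$ turns into $-(Q_1+Q_2-\sqrt{\cdots})$ of $Q_{12}$, and that the strict upper bound in Theorem~\ref{thm:key estimate} indeed applies here (which it does, since $r=2\ge 2$ and the inequality is strict without further hypothesis on $\bm{d}$).
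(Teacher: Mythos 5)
Your proposal is correct and matches the paper's route exactly: the paper likewise obtains \eqref{04eqn:main estimate} as a direct consequence of Theorem~\ref{04thm:main estimate} (with the $r=2$ closed form \eqref{02eqn:lambda for r=2} giving $-\Lambda=Q_{12}$) and notes that \eqref{01eqn:estimate for Q_12} is "verified directly," which is precisely what your Step~3 does via Proposition~\ref{01prop:01} and the strict upper bound in Theorem~\ref{thm:key estimate}. Your write-up simply supplies the sign bookkeeping the paper leaves implicit.
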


\begin{proof}
\eqref{04eqn:main estimate} is
a direct consequence of Theorem \ref{04thm:main estimate}
and \eqref{01eqn:estimate for Q_12} is verified directly.
\end{proof}

\begin{remark}\normalfont
If $n_1<n_2=\infty$, the right-hand side of \eqref{01eqn:Q_12} 
is replaced with the limit as $n_2\rightarrow\infty$.
If $n_1=n_2=\infty$, \eqref{01eqn:Q_12} is understood as
\[
Q_{12}=\frac{Q_1Q_2}{Q_1+Q_2}\,.
\]
\end{remark}

We give some examples in connection with
inequality \eqref{04eqn:main estimate}.

\begin{example}\normalfont
Let $K_3$ be the complete graph on three vertices.
The star product $K_3\star K_3$ is illustrated
in Figure \ref{Fig:star products}.
It is known that $\mathrm{QEC}(K_3)=-1$.
Inserting $Q_1=Q_2=-1$ and $n_1+1=n_2+1=3$ into \eqref{01eqn:Q_12},
we have
\[
Q_{12}=-\frac{3}{5}\,.
\]
On the other hand, by a direct verification we have
\[
\mathrm{QEC}(K_3\star K_3)=-\frac35\,,
\]
see also \cite[Sect. 5.2, No. 11]{Obata-Zakiyyah2017}.
In this case we have
\[
\max\{Q_1,Q_2\}<\mathrm{QEC}(K_3\star K_3)=Q_{12}<0.
\]
\end{example}

\begin{figure}[hbt]
\begin{center}
\includegraphics[width=64pt]{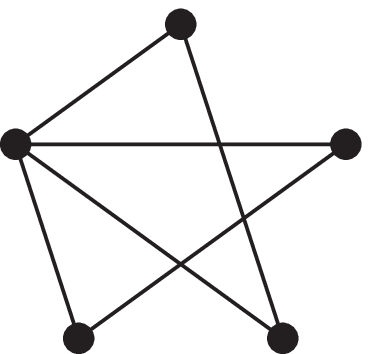}
\qquad\qquad
\includegraphics[width=64pt]{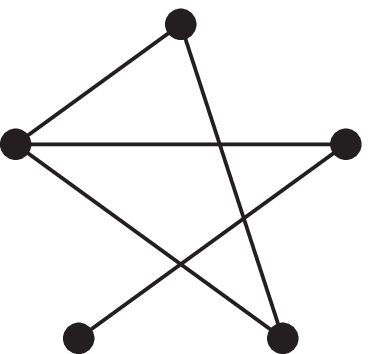}
\qquad\qquad
\includegraphics[width=64pt]{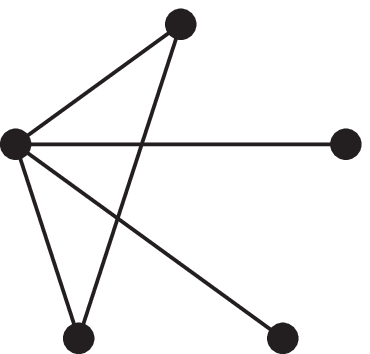}
\end{center}
\caption{$K_3\star K_3$ (left), $G_1$ (middle) and $G_2$ (right)}
\label{Fig:star products}
\end{figure}

\begin{example}\label{04ex:non-isomorphic star products}
\normalfont
We consider $K_3\star P_3$, where $P_3$ is the path on three vertices.
There are two non-isomorphic star products in this case,
say, $G_1$ and $G_2$ as shown in Figure \ref{Fig:star products}.
It is known that $\mathrm{QEC}(K_3)=-1$ and $\mathrm{QEC}(P_3)=-2/3$.
Inserting $Q_1=-1$, $Q_2=-2/3$, $n_1+1=n_2+1=3$
into \eqref{01eqn:Q_12}, we have
\[
Q_{12}=\frac{-15+\sqrt{105}}{10}=-\frac{12}{15+\sqrt{105}}
\approx -0.4753.
\]
On the other hand, it follows by a direct calculation
(see also \cite[Sect.~5.2, No.~4 and No.~7]{Obata-Zakiyyah2017})
that
\[
\mathrm{QEC}(G_1)=-\frac{6}{6+\sqrt{21}}\approx -0.5670,
\qquad
\mathrm{QEC}(G_2)=-\frac{12}{15+\sqrt{105}}\,.
\]
Thus, we obtain an interesting contrast:
\begin{align*}
&\max\{Q_1,Q_2\}<\mathrm{QEC}(G_1)<Q_{12}<0, \\
&\max\{Q_1,Q_2\}<\mathrm{QEC}(G_2)=Q_{12}<0.
\end{align*}
\end{example}

\begin{example}\normalfont
It is known that $\mathrm{QEC}(K_2)=-1$ and $\mathrm{QEC}(C_4)=0$,
where $C_4$ is the cycle on four vertices.
It follows from Theorem \ref{01thm:QEC=0}
that $\mathrm{QEC}(K_2\star C_4)=0$.
On the other had,
inserting $Q_1=-1$, $Q_2=0$,
$n_1+1=2$ and $n_2+1=4$ into \eqref{01eqn:Q_12},
we have $Q_{12}=0$.
Thus we have
\[
\max\{Q_1,Q_2\}=\mathrm{QEC}(K_2\star C_4)=Q_{12}=0.
\]
\end{example}

\begin{figure}[hbt]
\begin{center}
\includegraphics[width=64pt]{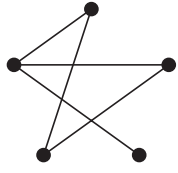}
\end{center}
\caption{$K_2\star C_4$}
\label{Fig: K_2 star C_4}
\end{figure}

Along with the above observation,
a natural question arises to determine
the extremal classes of star products
$G_1\star G_2$ such that
\[
\mathrm{QEC}(G_1\star G_2)=Q_{12}
\]
and 
\[
\mathrm{QEC}(G_1\star G_2)=\max\{Q_1,Q_2\}.
\]
Remind that the star product depends also
on the choice of distinguished vertices $o_1$ and $o_2$,
as is illustrated in Example
\ref{04ex:non-isomorphic star products}.

\section{Infinite graphs}
\label{Sec:Infinite graphs}

\subsection{A limit formula}

\begin{proposition}\label{04prop:infinite graphs}
Let $G=(V,E)$ be a connected graph.
Let $H_n=(W_n,F_n)$ be a sequence of connected subgraphs of $G$
such that $W_1\subset W_2\subset\dotsb$ and
$V=\bigcup_{n=1}^\infty W_n$.
If each $H_n$ is isometrically embedded in $G$,
we have
\begin{equation}\label{05eqn:limit of subgraphs}
\mathrm{QEC}(G)=\lim_{n\rightarrow\infty}\mathrm{QEC}(H_n).
\end{equation}
\end{proposition}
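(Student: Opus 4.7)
The plan is to prove the two inequalities $\lim_{n\to\infty}\mathrm{QEC}(H_n)\le \mathrm{QEC}(G)$ and $\mathrm{QEC}(G)\le \lim_{n\to\infty}\mathrm{QEC}(H_n)$ separately, using Proposition \ref{04prop:isometrically embedded subgraphs} on one side and the finite-support property of test functions in the definition \eqref{01eqn:QEC} of $\mathrm{QEC}$ on the other.

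For the upper bound, I would first note that $H_n$ is automatically isometrically embedded in $H_{n+1}$. Indeed, by the remark following the definition of isometric embedding, each $H_n$ coincides with the induced subgraph of $G$ spanned by $W_n$, hence also with the induced subgraph of $H_{n+1}$ spanned by $W_n$; and for $x,y\in W_n$ one has $d_{H_n}(x,y)=d_G(x,y)=d_{H_{n+1}}(x,y)$. Two applications of Proposition \ref{04prop:isometrically embedded subgraphs} then give $\mathrm{QEC}(H_n)\le\mathrm{QEC}(H_{n+1})\le\mathrm{QEC}(G)$, so $\{\mathrm{QEC}(H_n)\}$ is monotone increasing and bounded above, and consequently $\lim_{n\to\infty}\mathrm{QEC}(H_n)\le \mathrm{QEC}(G)$.

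For the lower bound, given $\epsilon>0$ I would invoke \eqref{01eqn:QEC} to choose $f\in C_0(V)$ with $\langle f,f\rangle=1$, $\langle\mathbf{1},f\rangle=0$, and $\langle f,Df\rangle>\mathrm{QEC}(G)-\epsilon$. Because $f$ has finite support and the $W_n$'s form a nested exhausting sequence, there exists $n$ with $\mathrm{supp}(f)\subset W_n$. Regarding $f$ as an element of $C_0(W_n)$, the normalization and zero-sum conditions pass over unchanged, and the isometric embedding of $H_n$ in $G$ gives
\[
\langle f,Df\rangle
=\sum_{x,y\in W_n}d_G(x,y)f(x)f(y)
=\sum_{x,y\in W_n}d_{H_n}(x,y)f(x)f(y)
=\langle f,D_{H_n}f\rangle_{W_n}
\le\mathrm{QEC}(H_n).
\]
Combining, $\mathrm{QEC}(G)-\epsilon<\mathrm{QEC}(H_n)\le\lim_{m\to\infty}\mathrm{QEC}(H_m)$, and letting $\epsilon\to 0$ yields the desired inequality, finishing the proof.

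The argument is essentially routine, with no serious obstacle; the only point requiring care is the observation that $H_n$ is isometrically embedded in $H_{n+1}$, which is what guarantees the monotonicity that makes the limit exist. Everything else is a direct consequence of the finite-support definition of $\mathrm{QEC}$ together with Proposition \ref{04prop:isometrically embedded subgraphs}.
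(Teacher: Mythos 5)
Your proof is correct and follows essentially the same route as the paper's: the lower bound comes from the finite support of the test function in \eqref{01eqn:QEC} together with the exhaustion $V=\bigcup_n W_n$, and the upper bound from Proposition \ref{04prop:isometrically embedded subgraphs}. The only difference is that you additionally verify that $H_n$ is isometrically embedded in $H_{n+1}$ to get monotonicity and hence existence of the limit, whereas the paper's two-sided estimate $\mathrm{QEC}(G)-\epsilon\le\mathrm{QEC}(H_n)\le\mathrm{QEC}(G)$ for all $n\ge n_0$ yields convergence directly without that step.
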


\begin{proof}
Let $D$ denote the distance matrix of $G$.
By definition, for any $\epsilon>0$ there exists
$f\in C_0(V)$ such that $\langle f,f\rangle=1$,
$\langle \bm{1},f\rangle=0$ and
$\langle f,Df\rangle\ge \mathrm{QEC}(G)-\epsilon$.
By assumption we may choose $n_0$ such that 
$f(x)=0$ outside of $W_n$ for all $n\ge n_0$.
Then $\mathrm{QEC}(H_n)\ge \langle f,Df\rangle$ for all $n\ge n_0$
and we have
\[
\mathrm{QEC}(G)-\epsilon\le \mathrm{QEC}(H_n),
\qquad n\ge n_0.
\]
On the other hand, it follows from 
Proposition \ref{04prop:isometrically embedded subgraphs} that
\[
\mathrm{QEC}(H_n)\le \mathrm{QEC}(G)
\]
Consequently, \eqref{05eqn:limit of subgraphs} holds.
\end{proof}

\begin{proposition}
Any (finite or infinite) tree is of QE class.
\end{proposition}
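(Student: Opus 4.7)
The plan is to use induction on the number of vertices for finite trees, invoking the star-product machinery developed in Section~\ref{Sec:Star product graphs}, and then to pass to infinite trees by exhaustion using Proposition~\ref{04prop:infinite graphs}. An alternative, more direct route would be to explicitly construct the embedding $\varphi(x)=\sum_{e\in P(o,x)}\xi_e$ into a Hilbert space with orthonormal basis $\{\xi_e\}$ indexed by the edges, where $P(o,x)$ is the unique path from a fixed root $o$ to $x$; since in a tree the paths from $x$ and $y$ to $o$ meet at a unique vertex and their symmetric difference is exactly the path from $x$ to $y$, one verifies $\|\varphi(x)-\varphi(y)\|^2=d(x,y)$ directly. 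However, the inductive route ties in more naturally with the structural theorems already proved, so I would present it as the main argument.

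For the finite case, I would induct on $|V|$. The bases $|V|=1$ and $|V|=2$ are trivial, the latter because $\mathrm{QEC}(K_2)=-1$. For the inductive step with $|V|\ge 3$, pick a leaf $v$ with unique neighbor $o$ and set $T'=T\setminus\{v\}$. Then $T=(T',o)\star(K_2,o')$, where $o'$ is the endpoint of $K_2$ to be glued to $o$. By the inductive hypothesis $\mathrm{QEC}(T')\le 0$, and $\mathrm{QEC}(K_2)=-1<0$. If $\mathrm{QEC}(T')=0$, Theorem~\ref{01thm:QEC=0} yields $\mathrm{QEC}(T)=0$; if $\mathrm{QEC}(T')<0$, Theorem~\ref{04thm:main estimate} yields $\mathrm{QEC}(T)\le-\Lambda$, where $\Lambda$ is the minimal solution of the corresponding algebraic equation and is strictly positive by Proposition~\ref{01prop:01}. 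In either case $\mathrm{QEC}(T)\le 0$, closing the induction.

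For an infinite tree $G=(V,E)$, I would enumerate $V=\{x_1,x_2,\dots\}$ and let $W_n\subset V$ be the vertex set of the smallest connected subtree of $G$ containing $\{x_1,\dots,x_n\}$, namely the union of the unique $G$-paths joining $x_1$ to each $x_i$ for $2\le i\le n$. The induced subgraph $H_n$ is then a finite subtree, $W_1\subset W_2\subset\cdots$ and $\bigcup_n W_n=V$. Because paths in a tree are unique, any $G$-path between two vertices of $W_n$ must lie inside $H_n$, so $H_n$ is isometrically embedded in $G$. The finite case gives $\mathrm{QEC}(H_n)\le 0$ for every $n$, and Proposition~\ref{04prop:infinite graphs} then yields $\mathrm{QEC}(G)=\lim_{n\to\infty}\mathrm{QEC}(H_n)\le 0$. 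The only mildly delicate point in the whole outline is the verification that the subtrees $H_n$ are isometrically embedded; this is automatic for trees but must be stated explicitly in order to invoke Proposition~\ref{04prop:infinite graphs}.
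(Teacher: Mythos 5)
Your argument is correct and is essentially the paper's: both reduce a finite tree to star products of copies of $K_2$ and invoke the star-product estimates of Section~\ref{Sec:Star product graphs}, then pass to infinite trees by exhausting with isometrically embedded finite subtrees via Proposition~\ref{04prop:infinite graphs}. The only difference in execution is that you peel off one leaf at a time (a binary star product at each step, with the case split between Theorems~\ref{01thm:QEC=0} and~\ref{04thm:main estimate}), whereas the paper writes the tree as an $(n-1)$-fold star product of $K_2$'s and applies Corollary~\ref{04cor:r-star product} in one shot, which also delivers the sharper quantitative conclusion $\mathrm{QEC}(G)<-1/(|V|-1)$.
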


\begin{proof}
For any tree we may choose a sequence of finite subtrees
of which the union covers the whole tree.
Note that any subtree of a tree is isometrically embedded.
Then, in view of Proposition \ref{04prop:infinite graphs}
it is sufficient to show that every finite tree is of QE class.
More precisely, for a finite tree $G=(V,E)$ on $|V|\ge3$ vertices we have
\begin{equation}\label{05eqn:QEC of tree}
\mathrm{QEC}(G)<-\frac{1}{|V|-1}\,.
\end{equation}
In fact, a tree on $n$ vertices is represented as
$G=G_1\star\dots\star G_{n-1}$, where each $G_j$ is isomorphic to $K_2$.
Note that $Q_j=\mathrm{QEC}(G_j)=\mathrm{QEC}(K_2)=-1$.
Then by Corollary \ref{04cor:r-star product} we obtain
\[
\mathrm{QEC}(G)
=\mathrm{QEC}(G_1\star \dots\star G_{n-1})
<\left(\frac{1}{Q_1}+\dots+\frac{1}{Q_{n-1}}\right)^{-1}
=-\frac{1}{n-1}\,,
\]
as desired.
\end{proof}

The above result is a reproduction of Haagerup \cite{Haagerup79}.
The estimate \eqref{05eqn:QEC of tree} is far from best possible.
It is an interesting question to determine the QE constant of a tree.

\begin{proposition}
Let $K_\infty$ be the infinite complete graph,
that is, the graph on a countably infinite set such that any pair
of distinct vertices are connected by an edge.
Then $\mathrm{QEC}(K_\infty)=-1$.
\end{proposition}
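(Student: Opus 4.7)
The approach is a direct computation: the distance matrix of $K_\infty$ has a very simple structure, namely $D(x,y)=1$ if $x\neq y$ and $D(x,x)=0$, so as a kernel on $V\times V$ we can write $D=J-I$, where $J(x,y)\equiv 1$ and $I$ is the diagonal identity kernel. This is defined meaningfully on $C_0(V)$ because only finitely many entries of $f$ are nonzero.

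The main step is the following identity. For any $f\in C_0(V)$ subject to the two constraints $\langle f,f\rangle=1$ and $\langle\mathbf{1},f\rangle=0$, one computes
\[
\langle f,Df\rangle
=\langle f,Jf\rangle-\langle f,f\rangle
=\Bigl(\sum_{x\in V}f(x)\Bigr)^{\!2}-\langle f,f\rangle
=\langle\mathbf{1},f\rangle^2-1
=-1.
\]
Since the quadratic form is \emph{constant} equal to $-1$ on the whole admissible set, the supremum in \eqref{01eqn:QEC} is equal to $-1$, which gives $\mathrm{QEC}(K_\infty)=-1$.

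As an alternative verification (which also fits the theme of this section) one may argue by the limit formula in Proposition \ref{04prop:infinite graphs}. Taking any exhausting sequence of finite complete subgraphs $K_n\subset K_\infty$, each $K_n$ is isometrically embedded in $K_\infty$ since distinct vertices remain at distance $1$. The same short calculation as above applied to $K_n$ yields $\mathrm{QEC}(K_n)=-1$ for every $n\ge 2$, and then \eqref{05eqn:limit of subgraphs} gives $\mathrm{QEC}(K_\infty)=\lim_{n\to\infty}\mathrm{QEC}(K_n)=-1$.

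There is essentially no obstacle here: the only thing to verify is that $\langle f,Df\rangle$ is well defined and equal to the stated expression for $f\in C_0(V)$, which is immediate because $f$ has finite support so both sums $\langle f,Jf\rangle=\langle\mathbf{1},f\rangle^2$ and $\langle f,f\rangle$ are finite sums. I would write the proof in the direct form since it is a one-line computation and needs no auxiliary machinery.
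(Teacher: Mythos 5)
Your direct computation is correct: writing $D=J-I$ and using the two constraints gives $\langle f,Df\rangle=\langle\mathbf{1},f\rangle^2-\langle f,f\rangle=-1$ identically on the admissible set, so the supremum is $-1$. This is a genuinely different route from the paper, whose entire proof is the two-line observation that every finite connected subgraph of $K_\infty$ is some $K_n$ with $\mathrm{QEC}(K_n)=-1$ (taken as known), followed by an appeal to the limit formula of Proposition \ref{04prop:infinite graphs} for an exhausting sequence of isometrically embedded subgraphs --- i.e., exactly your ``alternative verification.'' What your direct argument buys is self-containedness: it needs no auxiliary machinery, it simultaneously establishes $\mathrm{QEC}(K_n)=-1$ for all finite $n\ge2$ rather than citing it, and it reveals the stronger fact that the quadratic form is \emph{constant} on the constraint set, so the supremum is attained everywhere. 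What the paper's route buys is uniformity with the rest of Section \ref{Sec:Infinite graphs}, where the same limit proposition handles trees and the infinite paths $\mathbb{Z}_+$ and $\mathbb{Z}$, for which no such closed-form computation is available. Both proofs are complete; the only point worth being explicit about in your write-up is the one you already noted, namely that finiteness of the support of $f$ makes $\langle f,Jf\rangle$ a finite sum.
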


\begin{proof}
Every finite subgraph of $K_{\infty}$
is of the form $K_{n}$ and $\mathrm{QEC}(K_n)=-1$.
Now we apply Proposition~\ref{04prop:infinite graphs}.
\end{proof}

\subsection{The path graphs $P_n$}

For $n\ge1$ let $P_n$ be the path graph on
the vertex set $V=\{0,1,2,\dots,n-1\}$
and edge set $E=\{\{0,1\},\{1,2\},\dots,\{n-2,n-1\}\}$.
Let $D=[d(i,j)]$ be the distance matrix as usual.
Note that $d(i,j)=|i-j|$ for $i,j\in V$.
We start with the following

\begin{proposition}\label{propositionpnmatrix}
For $n\ge1$ let $c_n$ be the maximal number $c$ such that 
the $n\times n$ matrix
\begin{equation}\label{matrixforpnplus1}
\left[2\min\{i,j\}-c-c\cdot\delta_{ij}\right]_{i,j=1}^n
\end{equation}
is positive definite.
Then $\mathrm{QEC}(P_{n+1})=-c_n$.
\end{proposition}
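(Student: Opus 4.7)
The plan is to translate the definition of $\mathrm{QEC}(P_{n+1})$ directly into a positivity condition on an explicit $n\times n$ matrix by eliminating the constraint $\langle \mathbf{1},f\rangle=0$ and rewriting $\langle f,Df\rangle$ in a useful form.

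First I would parametrize the constraint surface. Write $V=\{0,1,\dots,n\}$ and $x_i=f(i)$. The condition $\sum_{i=0}^{n}x_i=0$ lets me eliminate $x_0=-(x_1+\dots+x_n)$, leaving $(x_1,\dots,x_n)\in\mathbb{R}^n$ as free parameters. Under this substitution I would compute
\begin{equation*}
\langle f,f\rangle=x_0^2+\sum_{i=1}^{n}x_i^2
=\Bigl(\sum_{i=1}^{n}x_i\Bigr)^{\!2}+\sum_{i=1}^{n}x_i^2.
\end{equation*}

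Next I would simplify $\langle f,Df\rangle=\sum_{i,j=0}^{n}|i-j|\,x_ix_j$. The key algebraic identity is $|i-j|=i+j-2\min\{i,j\}$, which gives
\begin{equation*}
\langle f,Df\rangle
=2\Bigl(\sum_{i=0}^{n}ix_i\Bigr)\!\Bigl(\sum_{j=0}^{n}x_j\Bigr)
-2\sum_{i,j=0}^{n}\min\{i,j\}\,x_ix_j.
\end{equation*}
The first term vanishes by the constraint, and the $i=0$ (or $j=0$) contributions in the second term are zero because $\min\{0,\cdot\}=0$. Hence
\begin{equation*}
\langle f,Df\rangle=-2\sum_{i,j=1}^{n}\min\{i,j\}\,x_ix_j,
\end{equation*}
which is the crucial simplification: the $\min$ matrix appears for free, indexed by $\{1,\dots,n\}$.

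Now the QE constant is characterized by the inequality $\langle f,Df\rangle\le(-c)\langle f,f\rangle$ holding for all admissible $f$, i.e.\ the condition
\begin{equation*}
\sum_{i,j=1}^{n}\bigl[2\min\{i,j\}-c-c\delta_{ij}\bigr]x_ix_j\ge 0
\qquad\text{for all }(x_1,\dots,x_n)\in\mathbb{R}^n,
\end{equation*}
which is exactly the positive semidefiniteness of the matrix \eqref{matrixforpnplus1}. So $-\mathrm{QEC}(P_{n+1})$ equals the supremum of $c$ for which \eqref{matrixforpnplus1} is positive semidefinite. To reconcile with the positive-definite formulation in the statement, I would note that positive definiteness is an open condition in $c$ while positive semidefiniteness is closed; the critical value $c_n$ is thus simultaneously the maximum for which \eqref{matrixforpnplus1} is PSD and the supremum for which it is PD, so the two definitions agree. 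I don't anticipate a genuine obstacle here beyond keeping careful track of signs and the index shift between $P_{n+1}$ (on $n+1$ vertices) and the $n\times n$ matrix (which arises after eliminating one degree of freedom via the zero-sum constraint).
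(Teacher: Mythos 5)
Your proposal is correct and follows essentially the same route as the paper: eliminate $x_0$ via the zero-sum constraint, use $|i-j|=i+j-2\min\{i,j\}$ to reduce $\langle f,Df\rangle$ to $-2\sum_{i,j=1}^{n}\min\{i,j\}\,x_ix_j$, express $\langle f,f\rangle$ as $\sum_{i,j}(1+\delta_{ij})x_ix_j$, and read off the positivity condition on the matrix \eqref{matrixforpnplus1}. Your closing remark on reconciling positive definiteness with positive semidefiniteness is a small point of care that the paper's proof passes over silently, but it does not change the argument.
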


\begin{proof}
Suppose $f\in C(V)$ satisfies $\langle\bm{1},f\rangle=0$.
Then we have
\begin{align}
\langle f,Df\rangle
&=\sum_{i,j=0}^{n} |i-j|f(i)f(j) 
\nonumber\\
&=\sum_{i=1}^{n} if(i)f(0)+\sum_{j=1}^{n} jf(0)f(j)
 +\sum_{i,j=1}^n |i-j|f(i)f(j).
\label{05eqn:in proof 5.4(1)}
\end{align}
For $1\le i\le n$ we set $x_i=f(i)$.
Since $f(0)=-x_1-\dots-x_n$, \eqref{05eqn:in proof 5.4(1)} becomes
\begin{equation}\label{05eqn:in proof 5.4(2)}
\langle f,Df\rangle
=\sum_{i,j=1}^{n}(-i-j+|i-j|)x_ix_j
=-\sum_{i,j=1}^{n}2\min\{i,j\}\,x_ix_j
\end{equation}
On the other hand, we have
\begin{equation}\label{05eqn:in proof 5.4(3)}
\langle f,f\rangle
=\sum_{i=0}^{n} f(i)^2
=\left(\sum_{i=1}^{n}x_i\right)^2+\sum_{i=1}^{n}x_i^2
=\sum_{i,j=1}^{n}(1+\delta_{ij})x_i x_j.
\end{equation}
The QE constant is the minimal constant $Q\in\mathbb{R}$ 
such that $\langle f,Df\rangle\le Q\langle f,f\rangle$
for all $f\in C(V)$ with $\langle \bm{1},f\rangle=0$,
or using \eqref{05eqn:in proof 5.4(2)} and
\eqref{05eqn:in proof 5.4(3)},
\[
-\sum_{i,j=1}^{n} 2\min\{i,j\} x_i x_j
\le Q\sum_{i,j=1}^{n}(1+\delta_{ij})x_i x_j
\]
holds for every choice of $x_1,\dots,x_n\in\mathbb{R}$,
In other words, $Q$ coincides with $-c$, where
$c\in\mathbb{R}$ is the maximal constant such that
\[
\sum_{i,j=1}^{n} (2\min\{i,j\}-c(1+\delta_{ij}))x_i x_j\ge0
\]
for every choice of $x_1,\dots,x_n\in\mathbb{R}$.
This completes the proof.
\end{proof}

By direct application of Proposition \ref{propositionpnmatrix}
we obtain
\begin{align*}
-\mathrm{QEC}(P_2)&=1,\\
-\mathrm{QEC}(P_3)&=2/3,\\
-\mathrm{QEC}(P_4)&=2-\sqrt{2}=0.585786\ldots,\\
-\mathrm{QEC}(P_5)&=(5-\sqrt{5})/5=0.552786\ldots,\\
-\mathrm{QEC}(P_6)&=4-2\sqrt{3}=0.535898\ldots,\\
-\mathrm{QEC}(P_7)&=0.526048\ldots,\\
-\mathrm{QEC}(P_8)&=4+2\sqrt{2}-\sqrt{20+14\sqrt{2}}=0.519783\ldots,\\
-\mathrm{QEC}(P_9)&=0.515546\ldots,\\
-\mathrm{QEC}(P_{10})
&=6+2\sqrt{5}-\sqrt{50+22\sqrt{5}}=0.512543\ldots.
\end{align*}
The numbers $-\mathrm{QEC}(P_7)$
and $-\mathrm{QEC}(P_9)$ are the smallest real roots
of the cubic equations
\[
7c^3-28c^2+28c-8=0,
\qquad
3c^3-18c^2+24c-8=0,
\]
respectively.

Now define a family of matrices:
$A_n=\left[4\min\{i,j\}-1-\delta_{ij}\right]_{i,j=1}^{n}$, where
$1\le n\le\infty$. In particular
\[
A_{\infty}=
\begin{bmatrix}
2&3&3&3&3&\cdots\\
3&6&7&7&7&\cdots\\
3&7&10&11&11&\cdots\\
3&7&11&14&15&\cdots\\
3&7&11&15&18&\cdots\\
\vdots&\vdots&\vdots&\vdots&\vdots&\ddots
\end{bmatrix}.
\]

\begin{proposition}
For $n\ge1$,
\[
\det A_{n}=n+1.
\]
Consequently,
$A_{\infty}$ is positive definite
as well as $A_n$ for all $n\ge1$.
\end{proposition}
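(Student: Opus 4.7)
The plan is to compute $\det A_n$ by reducing $A_n$ to a familiar tridiagonal matrix through elementary row and column operations that preserve the determinant. Because the off-diagonal entries $(A_n)_{ij} = 4\min\{i,j\}-1$ depend only on $\min\{i,j\}$, consecutive rows (and columns) differ in a very structured way, and successive-difference operations should collapse most of the matrix.

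Concretely, I would first perform the row operations $R_i \mapsto R_i - R_{i-1}$ for $i = n, n-1, \ldots, 2$ in that order. A direct computation of entries shows that the resulting matrix $B_n$ has row $1$ equal to $(2,3,3,\ldots,3)$, while for $i \ge 2$, row $i$ has zeros in positions $1,\ldots,i-2$, a $1$ in position $i-1$, a $3$ in position $i$, and $4$'s in positions $i+1,\ldots,n$. Next I would perform the column operations $C_j \mapsto C_j - C_{j-1}$ for $j = n, n-1, \ldots, 2$. Another direct computation shows that the resulting matrix $T_n$ is the tridiagonal matrix
\[
T_n = \begin{bmatrix}
2 & 1 & & & \\
1 & 2 & 1 & & \\
  & 1 & 2 & \ddots & \\
  &   & \ddots & \ddots & 1 \\
  &   &   & 1 & 2
\end{bmatrix}.
\]
Since elementary operations of this type preserve the determinant, $\det A_n = \det T_n$.

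It remains to observe that $\det T_n = n+1$, which follows from the standard three-term recursion $\det T_n = 2\det T_{n-1} - \det T_{n-2}$ (Laplace expansion along the first row), together with the initial values $\det T_1 = 2$ and $\det T_2 = 3$; the solution of this recursion is manifestly $n+1$. This yields $\det A_n = n+1$.

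For the stated consequence, I would invoke Sylvester's criterion: note that the leading $k \times k$ principal submatrix of $A_n$ (and of $A_\infty$) is exactly $A_k$, whose determinant is $k+1 > 0$ for every $k \ge 1$. Hence all leading principal minors of $A_n$ are positive, so $A_n$ is positive definite; and since every finite principal truncation of $A_\infty$ equals some $A_n$, the infinite matrix $A_\infty$ is positive definite as well. The main obstacle here is really just bookkeeping: carefully verifying the shapes of $B_n$ and $T_n$ after the two rounds of elementary operations, which is a routine but slightly delicate case analysis on the indices $j < i-1$, $j = i-1$, $j = i$, $j = i+1$, $j > i+1$.
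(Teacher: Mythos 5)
Your proposal is correct, and it takes a genuinely different route from the paper. You reduce $A_n$ by two full rounds of successive-difference operations (rows then columns) to the classical tridiagonal matrix $T_n$ with $2$ on the diagonal and $1$ on the off-diagonals, whose determinant $n+1$ follows from the standard three-term recursion; I checked the entry-by-entry case analysis for both $B_n$ and $T_n$ and it comes out exactly as you describe. The paper instead proves the identity by induction on $n$: it introduces a one-parameter family $A_n(u)$ in which only the $(n,n)$ entry is replaced by $u$, shows $\det A_n(u)=nu-(n-1)(4n+1)$ by subtracting the $(n-1)$st column from the $n$th and expanding along the last column (the auxiliary parameter is needed precisely because the cofactor that appears is not $A_{n-1}$ itself but $A_{n-1}(4n-5)$), and then specializes $u=4n-2$. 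Your reduction buys a cleaner conceptual picture --- the answer is recognized as the determinant of a well-known matrix rather than extracted from an ad hoc induction --- at the cost of a slightly longer bookkeeping verification; the paper's induction is shorter to write but less illuminating. Your treatment of the consequence via Sylvester's criterion is also fine (and more explicit than the paper's); the only hair worth splitting is that a general finite principal submatrix of $A_\infty$ is not itself some $A_k$, but it sits inside one, and for the quadratic form on finitely supported vectors the leading truncations already suffice, so nothing is missing.
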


\begin{proof}
We are going to prove a slightly more general statement.
For $n\ge1$ and $u\in\mathbb{R}$ we define an auxiliary matrix
$A_n(u)=[u_{ij}]_{i,j=1}^{n}$, 
where
\[
u_{ij}=
\begin{cases}
4\min\{i,j\}-1-\delta_{ij}\,, & (i,j)\neq(n,n), \\
u, & (i,j)=(n,n).
\end{cases}
\]
Then $A_{n}=A_{n}(4n-2)$.
We will prove that
\begin{equation}\label{auxiliarydeterminant}
\det A_{n}(u)=nu-(n-1)(4n+1).
\end{equation} 
This is true for $n=1$.
Assume that (\ref{auxiliarydeterminant}) holds for $n-1$.
Let $\bm{k}_j$ denote the $j$th column of $A_{n}(u)$. Then
\[
\det A_n(u)
=\det[\bm{k}_1\,, \dots, \bm{k}_n]
=\det[\bm{k}_1\,, \dots, \bm{k}_{n-1}\,, \bm{k}_{n}-\bm{k}_{n-1}].
\]
Now we observe that
\[
\bm{k}_{n}-\bm{k}_{n-1}
=[0, \dots, 0, 1, u-4n+5]^{\mathrm{T}},
\]
so expanding the determinant over the last column and applying the inductive assumption we get
\begin{align*}
\det A_n(u)&=(u-4n+5)\det A_{n-1}-\det A_{n-1}(4n-5)\\
&=(u-4n+5)n-(n-1)(4n-5)+(n-2)(4n-3)\\
&=nu-(n-1)(4n+1),
\end{align*}
hence \eqref{auxiliarydeterminant} holds for $n$.
\end{proof}

\begin{theorem}\label{06thm:estimate for QEC(P_n)}
For $n\ge2$ we have
\begin{equation}\label{estimationqecpn}
-\frac{2n^4+20n^2-7+15(-1)^n}{4n^4-4+15n+15n(-1)^n}
\le \mathrm{QEC}(P_n)
\le-\frac{1}{2}.
\end{equation}
\end{theorem}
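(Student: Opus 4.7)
The plan is to prove each inequality separately, using Proposition~\ref{propositionpnmatrix}, which expresses $-\mathrm{QEC}(P_n) = c_{n-1}$ where $c_{n-1}$ is the largest $c$ for which the $(n-1)\times(n-1)$ matrix $B_{n-1}(c) := [2\min\{i,j\} - c(1+\delta_{ij})]_{i,j=1}^{n-1}$ is positive semidefinite.

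For the upper bound $\mathrm{QEC}(P_n) \le -1/2$, set $c = 1/2$ and observe that $2\,B_{n-1}(1/2) = A_{n-1}$. The $k \times k$ leading principal submatrix of $A_{n-1}$ is precisely $A_k$, and the preceding proposition shows $\det A_k = k+1 > 0$ for every $k \ge 1$; Sylvester's criterion then gives that $A_{n-1}$, and thus $B_{n-1}(1/2)$, is positive definite. Hence $c_{n-1} \ge 1/2$ and $\mathrm{QEC}(P_n) = -c_{n-1} \le -1/2$.

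For the lower bound, use the variational form of Proposition~\ref{propositionpnmatrix},
\[
c_{n-1} = \inf_{\bm{x}\ne\bm{0}} \frac{\sum_{i,j=1}^{n-1} 2\min\{i,j\}\,x_i x_j}{\sum_{i=1}^{n-1} x_i^2 + \bigl(\sum_{i=1}^{n-1} x_i\bigr)^2}\,.
\]
Any specific nonzero trial vector $\bm{x}$ yields an upper bound on $c_{n-1}$, hence a lower bound on $\mathrm{QEC}(P_n)$. The parity dependence of the target formula strongly suggests choosing $\bm{x}$ of the form $x_i = (-1)^i p(i)$ with $p$ a low-degree polynomial in $i$. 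Expanding the numerator via $2\min\{i,j\} = i+j-|i-j|$ and Abel summation, then invoking the closed-form expressions for $\sum_{i=1}^{n-1} i^k$ and for the parity-weighted sums $\sum_{i=1}^{n-1}(-1)^i i^k$ with $k \le 4$ (each a polynomial in $n$ with an $(-1)^n$-dependent remainder), a careful simplification should collapse the Rayleigh quotient to exactly $\frac{2n^4 + 20n^2 - 7 + 15(-1)^n}{4n^4 - 4 + 15n + 15n(-1)^n}$.

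The principal obstacle is pinpointing the correct trial $\bm{x}$ that produces this ratio cleanly; once that vector is in hand, the remainder is a mechanical but lengthy polynomial bookkeeping. As a sanity check, at $n=2$ the stated bound evaluates to $-1$, agreeing with $\mathrm{QEC}(P_2) = -1$ exactly, while for $n = 3$ it gives $-1$, consistently weaker than $\mathrm{QEC}(P_3) = -2/3$.
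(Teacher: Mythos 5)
Your upper-bound half is complete and is exactly the paper's argument: $2B_{n-1}(1/2)=A_{n-1}$, the $k\times k$ leading principal submatrix of $A_{n-1}$ is $A_k$ with $\det A_k=k+1>0$, and Sylvester's criterion gives positive definiteness, hence $c_{n-1}\ge 1/2$.

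The lower-bound half has a genuine gap. Your strategy is the right one --- and it is the paper's: bound $c_{n-1}$ from above by evaluating the Rayleigh quotient at one alternating-sign test vector. But you stop at ``$x_i=(-1)^i p(i)$ for some low-degree polynomial $p$'' and concede you have not found $p$. That choice is the essential content of this half of the theorem: the paper takes $x_i=(-1)^i\,i(n-i)$ for $1\le i\le n-1$, and evaluating the resulting quadratic form is not routine bookkeeping but relies on three closed-form identities proved separately in Lemma~\ref{auxiliarylemma}, namely
\begin{align*}
2\sum_{i,j}\min\{i,j\}\,i(n-i)j(n-j)(-1)^{i+j}
&=\frac{n}{120}\left\{2n^4+20n^2-7+15(-1)^n\right\},\\
\sum_{i,j} i(n-i)j(n-j)(-1)^{i+j}
&=\frac{1}{8}\left(1+(-1)^n\right)n^2,
\qquad
\sum_{i} i^2(n-i)^2=\frac{n^5-n}{30},
\end{align*}
from which positivity of the quadratic form forces
$c\le\dfrac{2n^4+20n^2-7+15(-1)^n}{4n^4-4+15n+15n(-1)^n}$.
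Without the explicit vector and these identities (or an equivalent computation), the lower bound is asserted rather than proved; there is no a priori guarantee that \emph{some} polynomial choice collapses the quotient to exactly the stated expression. Your sanity checks at $n=2,3$ only confirm consistency of the target formula, not the existence of a vector achieving it.
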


\begin{proof}
For the right half of \eqref{estimationqecpn}
it suffices to note that for $c=1/2$
the matrix $A_n$ is a multiple by $2$ of the matrix given by
\eqref{matrixforpnplus1}.

We will prove the left half of \eqref{estimationqecpn}.
Suppose that the matrix
\[
\left[2\min\{i,j\}-c-c\cdot\delta_{ij}\right]_{i,j=1}^{n-1}
\]
is positive definite.
Then for $a_i^{(n)}=i(n-i)(-1)^i$ we have
\begin{equation}\label{05eqn:in proof 5.6(1)}
\sum_{i,j=1}^{n-1}
\left(2\min\{i,j\}-c-c\cdot\delta_{ij}\right)a_{i}^{(n)}a_{j}^{(n)}
\ge0.
\end{equation}
The above sum is calculated with the help of
Lemma \ref{auxiliarylemma} in the Appendix as follows:
\begin{align*}
&\sum_{i,j=1}^{n-1}
\left(2\min\{i,j\}-c-c\cdot\delta_{ij}\right)a_{i}^{(n)}a_{j}^{(n)} \\
&=2\sum_{i,j=1}^{n-1}\min\{i,j\}\, i(n-i)j(n-j)(-1)^{i+j} \\
&\qquad -c\sum_{i,j=1}^{n-1} i(n-i)j(n-j)(-1)^{i+j}
-c\sum_{i=1}^{n-1}i^2(n-i)^2 \\
&=\frac{n}{120}\left\{2n^4+20n^2-7+15(-1)^n\right\}
  -\frac{c}{8}\left\{1+(-1)^n\right\}n^2-c\,\frac{n^5-n}{30}.
\end{align*}
Then, \eqref{05eqn:in proof 5.6(1)} yields
\[
c\le\frac{2n^4+20n^2-7+15(-1)^n}{4n^4-4+15n+15n(-1)^n}\,,
\]
which, in view of Proposition~\ref{propositionpnmatrix},
proves \eqref{estimationqecpn}.
\end{proof}

Let $\mathbb{Z}$ be the one-dimensional integer lattice,
i.e., the two-sided infinite path on the integers,
and $\mathbb{Z}_+$ be the one-sided infinite path on
$\{0,1,2,\dots\}$.

\begin{theorem}
$\mathrm{QEC}(\mathbb{Z}_+)
=\mathrm{QEC}(\mathbb{Z})=-\dfrac12$\,.
\end{theorem}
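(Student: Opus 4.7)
The plan is to realize both $\mathbb{Z}_+$ and $\mathbb{Z}$ as increasing unions of isometrically embedded finite paths and then apply the limit formula of Proposition \ref{04prop:infinite graphs} together with the two-sided bound of Theorem \ref{06thm:estimate for QEC(P_n)}.

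First, I would verify that $\mathrm{QEC}(P_n) \to -1/2$ as $n \to \infty$. Theorem \ref{06thm:estimate for QEC(P_n)} already gives
\[
-\frac{2n^4+20n^2-7+15(-1)^n}{4n^4-4+15n+15n(-1)^n}
\le \mathrm{QEC}(P_n)\le -\frac{1}{2},
\]
and dividing numerator and denominator of the lower bound by $n^4$ shows it converges to $-2/4=-1/2$. Hence a squeeze yields $\lim_{n\to\infty}\mathrm{QEC}(P_n)=-1/2$.

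For $\mathbb{Z}_+$, let $H_n$ be the induced subgraph on the vertex set $\{0,1,\dots,n-1\}$, which is isomorphic to $P_n$. Since the graph distance in $\mathbb{Z}_+$ between $i,j\in\{0,\dots,n-1\}$ equals $|i-j|$, the subgraph $H_n$ is isometrically embedded. The inclusions $W_1\subset W_2\subset\dotsb$ exhaust $\mathbb{Z}_+$, so Proposition \ref{04prop:infinite graphs} gives $\mathrm{QEC}(\mathbb{Z}_+)=\lim_n\mathrm{QEC}(P_n)=-1/2$. For $\mathbb{Z}$, I would take $H_n$ to be the induced subgraph on $\{-n,-n+1,\dots,n\}$, which is isomorphic to $P_{2n+1}$ and again isometrically embedded; the union of the vertex sets is $\mathbb{Z}$, so the same proposition delivers $\mathrm{QEC}(\mathbb{Z})=\lim_n\mathrm{QEC}(P_{2n+1})=-1/2$.

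There is essentially no obstacle: the only substantive ingredients, namely the squeeze $\mathrm{QEC}(P_n)\to -1/2$ and the approximation by finite subgraphs, are already in place. The one point worth being careful about is to exhibit the path subgraphs as genuinely isometric subgraphs, which is immediate from $d(i,j)=|i-j|$ in both $\mathbb{Z}_+$ and $\mathbb{Z}$, so that Proposition \ref{04prop:infinite graphs} applies directly.
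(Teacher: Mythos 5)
Your argument is correct and is exactly the route the paper takes: the paper's proof likewise combines the squeeze from Theorem \ref{06thm:estimate for QEC(P_n)} (whose lower bound tends to $-1/2$) with the exhaustion-by-isometric-subgraphs limit formula of Proposition \ref{04prop:infinite graphs}, applied to the finite paths inside $\mathbb{Z}_+$ and $\mathbb{Z}$. Your write-up merely makes explicit the details the paper leaves implicit.
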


\begin{proof}
Every finite connected subgraph of $\mathbb{Z}_{+}$ and $\mathbb{Z}$
is of the form $P_{n}$ and $n$ can be arbitrarily large.
Therefore our statement is a consequence of
Theorem~\ref{06thm:estimate for QEC(P_n)} and
Proposition~\ref{04prop:infinite graphs}.
\end{proof}

\section{Appendix}
\label{Sec:Appendix}

\subsection{Some combinatorial identities}

In this part we are going to prove three identities which were used in the
proof of Theorem~\ref{06thm:estimate for QEC(P_n)}.

\begin{lemma}\label{auxiliarylemma}
For $n\ge1$ we have
\begin{align}
&\sum_{i,j=1}^{n}\min\{i,j\}\, i(n-j)j(n-j)(-1)^{i+j}
=\frac{n}{240}\left\{2n^4+20n^2-7+15(-1)^n\right\},
\label{formulaone}\\
&\sum_{i,j=1}^{n} i(n-i)j(n-j)(-1)^{i+j}
=\frac{1}{8}\left(1+(-1)^n\right)n^2,
\label{formulatwo}\\
&\sum_{i=1}^{n}i^2(n-i)^2
=\frac{n^5-n}{30}\,.
\label{formulathree}
\end{align}
\end{lemma}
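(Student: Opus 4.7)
The plan is to prove the three identities in order of increasing difficulty. Formula \eqref{formulathree} reduces immediately to standard power sums: expand $i^2(n-i)^2=n^2 i^2-2ni^3+i^4$ and substitute the closed expressions for $\sum_{i=1}^{n} i^2$, $\sum_{i=1}^{n} i^3$ and $\sum_{i=1}^{n} i^4$, then simplify.

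For \eqref{formulatwo} the double sum factorizes as $S_n^2$, where $S_n:=\sum_{i=1}^{n} i(n-i)(-1)^i$. Writing $i(n-i)=in-i^2$ and applying the elementary alternating partial sums $\sum_{i=1}^{n}(-1)^i i=[(-1)^n(2n+1)-1]/4$ and $\sum_{i=1}^{n}(-1)^i i^2=(-1)^n n(n+1)/2$ (both easily verified by pairing consecutive terms, or by induction) gives $S_n=-n[1+(-1)^n]/4$. Squaring and using $[1+(-1)^n]^2=2[1+(-1)^n]$ yields the right-hand side.

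For \eqref{formulaone} I would first invoke the indicator identity $\min\{i,j\}=\sum_{k=1}^{n}\mathbf{1}_{\{k\le i\}}\mathbf{1}_{\{k\le j\}}$, valid for $1\le i,j\le n$, in order to rewrite the double sum as $\sum_{k=1}^{n} T_k^2$, where $T_k:=\sum_{i=k}^{n} i(n-i)(-1)^i$. Writing $T_k=S_n-R_{k-1}$ with $R_m:=\sum_{i=1}^{m}i(n-i)(-1)^i$ and reusing the two alternating partial-sum formulas from the previous step, one obtains the explicit expression $T_k=\alpha(n)+(-1)^{k-1}\beta(k,n)$, where $\beta(k,n)$ is polynomial of degree at most $2$ in $k$. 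Squaring and summing on $k$ then reduces to standard power sums $\sum_{k=1}^{n} k^p$ for $p\le 4$ together with alternating sums $\sum_{k=1}^{n}(-1)^k k^p$ for $p\le 2$.

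The main obstacle is purely bookkeeping: the cross terms in the expansion of $T_k^2$ mix polynomial parts with the $(-1)^n$-dependent pieces inherited from $S_n$, and it is precisely these cross terms that will generate the correction $-7+15(-1)^n$ in the target formula. To keep the algebra tractable I would treat $(-1)^n$ as a formal indeterminate subject only to $[(-1)^n]^2=1$ throughout, rather than splitting into the cases $n$ even and $n$ odd; this way the identity is established uniformly in $n$ and the final simplification, although lengthy, is entirely routine.
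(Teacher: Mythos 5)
Your treatments of \eqref{formulatwo} and \eqref{formulathree} coincide with the paper's: the double sum in \eqref{formulatwo} factors as the square of $S_n=\sum_{i=1}^n i(n-i)(-1)^i=-n\bigl(1+(-1)^n\bigr)/4$, and \eqref{formulathree} is a direct power-sum expansion. (You have tacitly read the summand of \eqref{formulaone} as $\min\{i,j\}\,i(n-i)j(n-j)(-1)^{i+j}$, correcting the typo $i(n-j)$ in the statement; this is the reading the paper itself uses in the proof of Theorem~\ref{06thm:estimate for QEC(P_n)}, so that is fine.) For \eqref{formulaone} your route is genuinely different from, and arguably cleaner than, the paper's. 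The paper fixes $j$ and splits the inner sum at $i=j$, defining $A_j=\sum_{i=1}^{j}i^2(n-i)j(n-j)(-1)^{i+j}$ and $B_j=\sum_{i=j+1}^{n}i(n-i)j^2(n-j)(-1)^{i+j}$, evaluates each in closed form, and then sums $A_j+B_j$ over $j$; the two partial sums are asymmetric and each carries its own $(-1)^{j}$ and $(-1)^{j+n}$ corrections. You instead write $\min\{i,j\}=\sum_{k=1}^{n}\mathbf{1}_{\{k\le i\}}\mathbf{1}_{\{k\le j\}}$, which converts the double sum into $\sum_{k=1}^{n}T_k^2$ with $T_k=\sum_{i=k}^{n}i(n-i)(-1)^i$. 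Since $T_k=S_n-R_{k-1}$ and $R_m$ is computable from the same two alternating partial sums, $T_k=\alpha(n)+(-1)^{k-1}\beta(k,n)$ with $\beta$ quadratic in $k$ and $\alpha=-n(-1)^n/4$, so $\sum_k T_k^2$ reduces to ordinary power sums of degree $\le4$ plus alternating sums of degree $\le2$, exactly as you say. What your decomposition buys is a single tail sum to evaluate instead of two partial sums, a manifestly non-negative representation $\sum_k T_k^2$ of the quantity, and a clean separation of the $(-1)^n$-dependence into $\alpha$; the bookkeeping is comparable to the paper's but more symmetric. The argument is complete in outline and the remaining computation is routine; I see no gap.
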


\begin{proof}
For $n=0$ the identities remain true
understanding that the left-hand sides are zero.
The above three identities are used
in the proof of Theorem \ref{06thm:estimate for QEC(P_n)}.
For the proofs we will apply well-known formulas for the sums:
\begin{align*}
\sum_{i=1}^{n}i&=\frac{1}{2}n(n+1),&
\sum_{i=1}^{n}i^2&=\frac{1}{6}n(n+1)(2n+1),\\
\sum_{i=1}^{n}i^3&=\frac{1}{4}n^2(n+1)^2,&
\sum_{i=1}^{n}i^4&=\frac{1}{30}(n+1)(2n+1)(3n^2+3n-1),
\end{align*}
and also the following elementary identities:
\begin{align*}
\sum_{i=1}^{n} i(-1)^i
&=\frac{1}{4}\left(2n(-1)^n+(-1)^{n}-1\right),\\
\sum_{i=1}^{n} i^2(-1)^i
&=\frac{1}{2}n(n+1)(-1)^{n},\\
\sum_{i=1}^{n} i^3(-1)^i
&=\frac{1}{8}\left\{4n^3(-1)^n+6n^2(-1)^{n}-(-1)^j+1\right\}.
\end{align*}
Now we prove \eqref{formulaone}.
Put
\begin{align*}
A_{j}&=\sum_{i=1}^{j} i^2(n-i)j(n-j)(-1)^{i+j},\\
B_j&=\sum_{i=j+1}^{n} i(n-i)j^2(n-j)(-1)^{i+j}.
\end{align*}
By elementary calculations we find that
\begin{align*}
A_j&=\frac{j(n-j)}{8}\left\{4j^2n+4jn-4j^3-6j^2+1-(-1)^{j}\right\},\\
B_j&=\frac{j^2(n-j)}{4}\left\{2j^2+2j-n-2jn-(-1)^{j+n}n\right\}.
\end{align*}
Then we have
\begin{align*}
A_j+B_j
&=\sum_{i=1}^{n}\min\{i,j\}\,i(n-i)j(n-j)(-1)^{i+j} \\
&=\frac{1}{8}j(n-j)\left\{2jn-2j^2+1-2(-1)^{j+n}jn-(-1)^j\right\}.
\end{align*}
Summing up both sides over $j=1,2,\dots, n$,
we get \eqref{formulaone}.

Relation \eqref{formulatwo} follows from
\begin{align*}
\sum_{i,j=1}^{n} i(n-i)j(n-j)(-1)^{i+j}
&=\left(\sum_{i=1}^{n} i(n-i)(-1)^{i}\right)^2 \\
&=\left(\frac{-\left(1+(-1)^n\right)n}{4}\right)^2
=\frac{\left(1+(-1)^n\right)n^2}{8}.
\end{align*}
Relation \eqref{formulathree} can be shown in a similar manner.
\end{proof}

\subsection{A new integer sequence}

For $n\ge0$ let $a_n$ be the number given by \eqref{formulaone},
i.e.,
\begin{align}
a_n
&=\sum_{i,j=1}^{n}\min\{i,j\}\, i(n-j)j(n-j)(-1)^{i+j}
\nonumber \\
&=\frac{n}{240}\left\{2n^4+20n^2-7+15(-1)^n\right\}.
\label{06eqn:neq integer sequence}
\end{align}
Then the sequence $\{a_n\}_{n=0}^{\infty}$ begins with
\[
0, 0, 1, 4, 14, 36, 83, 168, 316, 552, 917, 1452,
2218, 3276, 4711, 6608,\dots
\]
and is absent in OEIS \cite{OEIS}.
Applying formula
\[
\sum_{n=1}^{\infty}n^N z^n=\frac{z P_{N}(z)}{(1-z)^{N+1}},
\]
where $P_{N}(z)$ are the classical Eulerian polynomials,
we can compute the generating function:
\begin{equation}\label{06eqn:generating function of new integer sequence}
\sum_{n=0}^{\infty} a_n z^n=\frac{z^2(1+z^2)^2}{(1+z)^2(1-z)^6}.
\end{equation}

Denote the ceiling of $n^2/2$ by $b_n=\lceil n^2/2\rceil$.
This sequence appears in OEIS as A000982. Now we observe that $a_n$
is the convolution of the sequence $b_n$ with itself.

\begin{proposition}
For every $n\ge0$ we have $a_n=\sum_{k=0}^{n} b_{k}b_{n-k}$.
\end{proposition}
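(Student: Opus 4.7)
The plan is to prove this by generating functions, using the formula \eqref{06eqn:generating function of new integer sequence} that has already been established for $\sum_{n\ge 0} a_n z^n$. By the convolution theorem for formal power series, the claim
\[
a_n=\sum_{k=0}^{n} b_k b_{n-k}
\]
is equivalent to the identity
\[
\sum_{n=0}^{\infty} a_n z^n=\left(\sum_{n=0}^{\infty} b_n z^n\right)^{2}.
\]
Since the right-hand side of \eqref{06eqn:generating function of new integer sequence} factors as
\[
\frac{z^2(1+z^2)^2}{(1+z)^2(1-z)^6}
=\left(\frac{z(1+z^2)}{(1+z)(1-z)^3}\right)^{2},
\]
it suffices to identify
\[
B(z):=\sum_{n=0}^{\infty} b_n z^n=\frac{z(1+z^2)}{(1+z)(1-z)^3}.
\]

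First, I would rewrite $b_n=\lceil n^2/2\rceil$ in the closed form $b_n=\bigl(2n^2+1-(-1)^n\bigr)/4$, which one can verify by checking the parities of $n$ separately. This expresses $B(z)$ as a linear combination of three elementary generating functions:
\[
B(z)=\frac12\sum_{n\ge 0} n^2 z^n+\frac14\sum_{n\ge 0} z^n-\frac14\sum_{n\ge 0}(-z)^n
=\frac{z(1+z)}{2(1-z)^3}+\frac{1}{4(1-z)}-\frac{1}{4(1+z)}.
\]
Collecting the last two terms over the common denominator $(1-z)(1+z)$ gives $z/(2(1-z)(1+z))$, and then the standard common denominator $(1+z)(1-z)^3$ reduces the whole expression, after cancelling $(1+z)^2+(1-z)^2=2(1+z^2)$ in the numerator, to
\[
B(z)=\frac{z(1+z^2)}{(1+z)(1-z)^3}.
\]

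Squaring and comparing with \eqref{06eqn:generating function of new integer sequence} then yields $\sum a_n z^n=B(z)^2$, and extracting the coefficient of $z^n$ gives the convolution identity. The only work lies in the generating-function computation for $b_n$; there is no real obstacle, just careful bookkeeping of the three elementary series. As a sanity check one may verify the first few cases directly: for instance $a_3=4=b_0 b_3+b_1 b_2+b_2 b_1+b_3 b_0=0+2+2+0$, and $a_4=14=2\cdot 8+2\cdot 2+2\cdot 2-4=$, more cleanly $b_0b_4+b_1b_3+b_2^2+b_3b_1+b_4b_0=0+5+4+5+0=14$, in agreement with \eqref{06eqn:neq integer sequence}.
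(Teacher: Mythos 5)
Your proof is correct and follows the same route as the paper: both recognize the generating function \eqref{06eqn:generating function of new integer sequence} as the square of $z(1+z^2)/\bigl((1+z)(1-z)^3\bigr)$ and identify that factor as the generating function of $b_n$. The only difference is that you derive this factor from the closed form $b_n=\bigl(2n^2+1-(-1)^n\bigr)/4$, whereas the paper simply cites OEIS entry A000982; your version is self-contained and the computation checks out.
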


\begin{proof}
The generating function for $a_n$
is the square of
\[
\frac{z(1+z^2)}{(1+z)(1-z)^3},
\]
which is the generating function for $b_n$, see entry A000982 in OEIS.
\end{proof}



\begin{thebibliography}{99}

\bibitem{Aouchiche-Hansen2014}
M. Aouchiche and P. Hansen:
\textit{Distance spectra of graphs: a survey},
Linear Algebra Appl. {\bfseries 458} (2014), 301--386.

\bibitem{Balaji-Bapat2007}
R. Balaji and R. B. Bapat:
\textit{On Euclidean distance matrices},
Linear Algebra Appl. {\bfseries 424} (2007), 108--117.

\bibitem{Bapat2010}
R. B. Bapat: ``Graphs and Matrices,"
Springer, Hindustan Book Agency, New Delhi, 2010.

\bibitem{Bozejko88}
M. Bo\.zejko, T. Januszkiewicz and R. J. Spatzier:
\textit{Infinite Coxeter groups do not have Kazhdan's property},
J. Operator Theory 19 (1988), 63--67.

\bibitem{Bozejko89}
M. Bo\.zejko:
\textit{Positive-definite kernels, length functions on groups
and noncommutative von Neumann inequality},
Studia Math. \textbf{95} (1989), 107--118.

\bibitem{Haagerup79}
U. Haagerup:
\textit{An example of a nonnuclear C$^*$-algebra
which has the metric approximation property},
Invent. Math. \textbf{50} (1979), 279--293.

\bibitem{Hora-Obata2007}
A. Hora and N. Obata:
``Quantum Probability and Spectral Analysis of Graphs,"
Springer, 2007.

\bibitem{Jaklic-Modic2010}
G. Jakli\v{c} and J. Modic:
\textit{On properties of cell matrices}.
Appl. Math. Comput. {\bfseries 216} (2010), 2016--2023.

\bibitem{Jaklic-Modic2013}
G. Jakli\v{c} and J. Modic:
\textit{On Euclidean distance matrices of graphs},
Electron. J. Linear Algebra {\bfseries 26} (2013), 574--589.

\bibitem{Jaklic-Modic2014}
G. Jakli\v{c} and J. Modic:
\textit{Euclidean graph distance matrices of generalizations
of the star graph},
Appl. Math. Comput. {\bfseries 230} (2014), 650--663.

\bibitem{Liberti-Lavor-Maculan-Mucherino2014}
L. Liberti, G. Lavor, N. Maculan and A. Mucherino:
\textit{Euclidean distance geometry and applications},
SIAM Rev. {\bfseries 56} (2014), 3--69.


\bibitem{Mlotkowski1992}
W. M\l otkowski:
\textit{Positive and negative definite kernels on trees},
in ``Harmonic Analysis and Discrete Potential Theory",
pp.~107--110, Plenum Press, 1992.

\bibitem{Obata2007}
N. Obata: \textit{Positive Q-matrices of graphs},
Studia Math. {\bfseries 179} (2007), 81--97.

\bibitem{Obata2011}
N. Obata:
\textit{Markov product of positive definite kernels and applications to
Q-matrices of graph products},
Colloq. Math. {\bfseries 122} (2011), 177--184.

\bibitem{Obata-Zakiyyah2017}
N. Obata and A. Y. Zakiyyah:
\textit{Distance matrices and quadratic embedding of graphs},
preprint, 2017.

\bibitem{OEIS}
The On-Line Encyclopedia of Integer Sequences,
https://oeis.org/

\bibitem{Schoenberg1935}
I. J. Schoenberg:
\textit{Remarks to Maurice Fr\'echet's article
``Sur la d\'efinition axiomatique d'une
classe d'espace distanci\'s vectoriellement applicable sur l'espace de Hilbert'',}
Ann. of Math. {\bfseries 36} (1935), 724--732.

\bibitem{Schoenberg1938}
I. J. Schoenberg:
\textit{Metric spaces and positive definite functions},
Trans. Amer. Math. Soc. {\bfseries 44} (1938), 522--536.

\bibitem{Young-Householder1938}
G. Young and A. S. Householder:
\textit{Discussion of a set of points in terms of their mutual distances},
Psychometrika {\bfseries 3} (1938), 1--22.


\end{thebibliography}
\end{document}